\documentclass[11pt]{article}
\usepackage[margin=1in]{geometry}
\usepackage{amsmath,amssymb,amsthm,mathtools}
\usepackage{mathrsfs,mathabx}  
\usepackage{microtype}
\usepackage{enumitem,tikz}
\usepackage[colorlinks=true,linkcolor=blue,citecolor=blue,urlcolor=blue]{hyperref}

\newtheorem{theorem}{Theorem}[section]
\newtheorem{lemma}[theorem]{Lemma}
\newtheorem{proposition}[theorem]{Proposition}
\newtheorem{corollary}[theorem]{Corollary}

\newtheorem{question}[theorem]{Question}
\newtheorem{observation}[theorem]{Observation}
\newtheorem{remark}[theorem]{Remark}
\newtheorem{example}[theorem]{Example}

\usepackage[comma,sort, numbers]{natbib}

\allowdisplaybreaks

\newcommand{\R}{\mathbb R}
\newcommand{\V}{\mathcal V}  
\newcommand{\join}{\vee}

\title{ Large Planar Point Sets Contain 4 Collinear Points or Almost 7-Cliques, and Related Results }

\author{Bhaswar B. Bhattacharya\thanks{Department of Statistics and Data Science, University of Pennsylvania and National University of Singapore \texttt{bhaswar@wharton.upenn.edu}} \and 
Sandip Das\thanks{Advanced Computing and Microelectronics Unit, Indian Statistical Institute, Kolkata, \texttt{sandip.das.69@gmail.com}}
\and 
Sk Samim Islam\thanks{Advanced Computing and Microelectronics Unit, Indian Statistical Institute, Kolkata, \texttt{samimislam08@gmail.com}}
\and 
Aashirwad Mohapatra\thanks{Advanced Computing and Microelectronics Unit, Indian Statistical Institute, Kolkata, \texttt{aashirwad\_r@isical.ac.in}} 
\and 
Saumya Sen\thanks{Advanced Computing and Microelectronics Unit, Indian Statistical Institute, Kolkata, \texttt{saumyasen72@gmail.com}}
}

\date{  }

\begin{document}
\maketitle

\begin{abstract}
We prove that every sufficiently large finite planar point set contains either four collinear points or seven points with at most one non-visible pair. More generally, we show that for every fixed graph $H$ with chromatic number at most five, or with chromatic number six and a color-critical edge, the visibility graph of every sufficiently large finite planar point set with no four collinear points contains a copy of $H$. These results extend the recent breakthrough of \citet{Bonnet}, guaranteeing six pairwise visible points, and come within one visibility edge of the next open case of the big-line-big-clique conjecture.  
\end{abstract}

\section{Introduction}

Given a finite point set $P\subset\R^2$, two distinct points $p,q\in P$ are said to be \emph{visible} in $P$ if the open line segment joining them contains no point of $P$. The \emph{visibility graph} of $P$, to be denoted by $\V_P$, has vertex set $P$, with two distinct vertices adjacent if and only if they are visible in $P$. The celebrated \emph{big-line-big-clique conjecture} of \citet{KaraPorWood} asserts that, for every pair of integers $k,\ell\ge3$, every sufficiently large finite planar point set contains either $\ell$ collinear points or $k$ pairwise visible points (see \cite{LeuchtnerNicolasSuk,PorWoodInfinite,Pfender,PorWood,Matousek}
and references therein for other related results and conjectures).

If no three points of a set are collinear, then the visibility graph is complete, hence,  the big-line-big-clique conjecture holds trivially for $\ell=3$ and every $k \geq 3$. \citet{KaraPorWood} established the conjecture for $k\le4$ and all $\ell \geq 4$. The case $(k,\ell)=(5,4)$ was subsequently proved in \cite{geometric}. Later, \citet{convex} proved the conjecture for $k=5$ and every $\ell\ge3$. In fact, they established the stronger statement that, for every fixed $\ell\ge3$, every sufficiently large finite planar point set contains either $\ell$ collinear points or an empty pentagon. Here, an \emph{empty pentagon} is a set of five
points in strictly convex position whose convex hull contains
no other point of $P$, either in its interior or on its boundary. Such a set of five vertices is therefore pairwise visible in $P$. Later, \citet{pentagonsharp} obtained the optimal order of growth in $\ell$, showing that the minimum number of points needed to guarantee $\ell$ collinear points or an empty pentagon is $\Theta(\ell^2)$. Until recently, the big-line-big-clique conjecture, which also appears as Problem 70 in Ben Green's \emph{100 open problems}~\cite{list}, remained open for every pair $(k,\ell)$, with $k\ge6$ and $\ell\ge4$. Very recently, \citet{Bonnet} made a major breakthrough by proving that every sufficiently large finite planar point set contains either four collinear points or six pairwise visible points, thereby resolving the first previously open case: $(k,\ell)=(6,4)$.

In this paper, we extend Bonnet's results~\cite{Bonnet} in several directions. To describe our results, recall that Bonnet's main theorem~\cite[Theorem~1]{Bonnet} guarantees a copy of $K_6$ in the visibility graph of every sufficiently large planar point set with no four collinear points. This led us to the following broader question: for which fixed graphs $H$ does the visibility graph of every sufficiently large planar point set with no four collinear points contain a copy of $H$? Our first result shows that this holds for every fixed graph with chromatic number at most five. 
Throughout, $\chi(H)$ denotes the chromatic number of $H$, and copies of graphs are understood to be ordinary, not necessarily induced. Further, a graph is called $H$-free if it contains no such copy of $H$.

\begin{theorem}\label{thm:five}
Let $H$ be a fixed graph with $\chi(H)\le5$. There is an integer $N_H$
such that every finite point set $P\subset\R^2$ with $|P|\ge N_H$
contains four collinear points or a copy of $H$ in $\V_P$.
\end{theorem}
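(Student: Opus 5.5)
Since $\chi(H)\le 5$, $H$ is a subgraph of the complete $5$-partite graph $K_5(t)$ with all parts of size $t=|V(H)|$. It therefore suffices to show the following: for every $t$ there is $N$ such that every set $P$ of at least $N$ points with no four collinear contains five pairwise disjoint $t$-sets $A_1,\dots,A_5\subset P$ such that every point of $A_i$ sees every point of $A_j$ for $i\ne j$. My plan is to obtain this \emph{blow-up} of a visible $5$-clique by combining a supersaturation (density) argument with the Kővári--Sós--Turán / Erdős hypergraph blow-up theorem. That theorem says that a $5$-uniform hypergraph on $n$ vertices with $\varepsilon n^5$ edges contains a complete $5$-partite $5$-graph $K^{(5)}(t,\dots,t)$ once $n$ is large.

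\textbf{Step 1 (counting lemma / supersaturation).} I will show there are constants $c>0$ and $n_0$ such that every $P$ with no four collinear and $|P|=n$ contains at least $c\,n^5$ five-element subsets that are pairwise visible. I will apply the known Ramsey-type result for $(k,\ell)=(5,4)$ (from \cite{geometric}, or from \cite{convex} via empty pentagons) with threshold $m$, to every $m$-subset $Q\subseteq P$. The catch is that visibility in $Q$ does not imply visibility in $P$, since points of $P\setminus Q$ may block segments. So I will work instead with \emph{empty pentagons}, or with a robust notion of visibility, and use the fact that with no four collinear, each segment $pq$ has at most one point of $P$ in its interior. Concretely, I will count pairs $(Q,S)$ with $S\subseteq Q$ a pentagon empty with respect to $Q$; averaging then gives many such $S$ if I can control how often $S$ is empty in a random $m$-set but not in $P$. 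The cleaner route, which I will try first, is to use the result of \cite{pentagonsharp} that $\Theta(\ell^2)$ points suffice for an empty pentagon (with respect to the whole set) when there are no $\ell$ collinear points. Taking $\ell=4$, any $C$ points of $P$ contain a $5$-set that is empty in that subset. To lift this to emptiness in $P$, I will partition $P$ by a cutting or simplicial partition into cells and pick one point per cell in a way that keeps emptiness. Alternatively, I will use the fact that since each blocked segment has exactly one blocker, a uniformly random subset $Q$ of size $m$ is unlikely to miss the blocker. That would give a positive fraction of the $\binom{n}{5}$ sets, after an Erdős--Simonovits style averaging.

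\textbf{Step 2 (blow-up).} Given $c n^5$ pairwise-visible $5$-sets, Erdős' theorem yields disjoint $A_1,\dots,A_5$ of size $t$ such that every transversal $(a_1,\dots,a_5)$ is a pairwise visible $5$-set. Every cross pair $a_i,a_j$ lies in such a transversal, so all cross pairs are visible, and $H\subseteq K_5(t)$ embeds in $\V_P$. Step 2 is routine.

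\textbf{Main obstacle.} The difficulty is Step 1, namely passing from ``visible within a subsample'' to ``visible within $P$.'' A uniform random subsample loses blockers, so the naive averaging gives sets that are visible in $Q$ but possibly blocked in $P$. To handle this, I would choose the subsamples structurally rather than randomly. I would apply a Ramsey-type reduction (for instance Erdős--Szekeres or order-type regularity) to find $m$ large subsets $B_1,\dots,B_m$ of $P$ with homogeneous order type, such that for a transversal, visibility is determined by the index pattern and blockers lie inside other $B_i$'s or outside in a controlled way. Then visibility of a transversal in $P$ becomes a combinatorial property of the index pattern, and the $(5,4)$ theorem applied to a suitable representative configuration gives a good pattern. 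If that fails, the fallback is Bonnet's method of \cite{Bonnet}, adapted to produce positive density rather than a single clique.
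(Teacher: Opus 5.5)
\textbf{Step 1 is unproved, and it is the whole theorem.} Your reduction to $K_5(t)$ is fine, and so is your Step 2, which is the standard hypergraph route to the Erd\H{o}s--Stone theorem that the paper uses as a black box. None of your proposed routes to Step 1 works.

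The random-subsample idea is backwards. Take $Q$ a uniform $m$-subset with $m$ fixed and $n\to\infty$. Given that $Q$ contains a blocked pair, it contains the blocker with probability $(m-2)/(n-2)\to0$. Indeed, $P$ has at most $\binom n2/3$ three-point lines, so the expected number of collinear triples in $Q$ is $O(m^3/n)$. Thus $Q$ is almost surely in general position, and every pair is visible in $Q$. Applying the $(5,4)$ or empty-pentagon theorem to $Q$ therefore tells you nothing about visibility in $P$. Your last paragraph concedes this, contradicting your earlier claim that $Q$ is ``unlikely to miss the blocker''. The cutting and order-type-Ramsey ideas are undeveloped and hit the same obstruction. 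The blocker of a transversal pair can be any point of $P$ on that segment, and same-type lemmas control only the transversals themselves.

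\textbf{What is missing has two parts.}
\begin{enumerate}
\item You need subsets closed under taking blockers, such as intervals of a generic projection order. For these, $\V_J=\V_P[J]$ (Observation~\ref{obs:J}).
\item Even with these, averaging an existence theorem over short intervals gives only $\Omega(n/m)$ distinct visible $5$-sets, since there are only $n-m+1$ intervals of length $m$. Positive density must come from an edge-density statement: some long interval $J$ has $|E(\V_P[J])|>(\frac38+\varepsilon)|J|^2$. After that, Erd\H{o}s--Stone (or supersaturation plus your Step 2) finishes.
\end{enumerate}

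The paper proves this density statement with the sliding-window injection $\overline{\mathcal Q}_{2K}\le\mathcal Q_K$ of Lemma~\ref{lem:interval}. Each occurrence of a blocked pair in a window of length $2K$ is sent, via its unique blocker, to an occurrence of a visible subpair inside one of the two halves of length $K$. The map is injective because no four points are collinear. If every long window had density at most $\rho<\frac25$, Lemma~\ref{lem:density} would force $n=O(K)$. Running this with $K\approx n/8$ gives a window of linear length with density above $\frac38$. So your Step 1 is in fact true, but only through an argument of this kind. Your proposal does not supply one; the fallback to Bonnet's method is a pointer, not an argument.
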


The proof of Theorem~\ref{thm:five} is given in Section~\ref{sec:fivepf}. A high-level outline of the proof is given in Section \ref{sec:pf}.

\begin{remark} 
{\em Recall that the result of \citet{convex}, and the subsequent improvement by \citet{pentagonsharp}, guarantees an empty pentagon in every sufficiently large planar point set with no four collinear points, a conclusion that is substantially stronger than
the existence of a $5$-clique in its visibility graph. On the other hand, Theorem~\ref{thm:five} shows that, for every
fixed graph $H$ with $\chi(H)\le5$, the visibility graph of every
sufficiently large planar point set with no four collinear points
contains a copy of $H$. Note that any such graph $H$ can be embedded as a subgraph of the complete balanced 5-partite graph with $|V(H)|$ vertices in each part. Hence, Theorem~\ref{thm:five} is equivalent to the assertion that the visibility graph contains any fixed complete balanced 5-partite graph, if the point set is sufficiently large. Geometrically, this means that one can find five pairwise disjoint groups of points of any fixed sizes such that every point in one group is visible to every point in each of the other groups. No restriction is imposed on visibility within each group.  }  
\end{remark}

Next, we turn to graphs with chromatic number six. To this end, let $H=(V(H),E(H))$ be a fixed graph with $\chi(H)=6$. An edge $e\in E(H)$ is called {\it color-critical} if $\chi(H\backslash e)=\chi(H)-1=5$, where $H\backslash e$ is the graph obtained by deleting the edge $e$ from $H$. Color-critical edges play a classical role in extremal graph theory. A well-known result of \citet{Simonovits,symmetricgraph} shows that, for a fixed graph $H$ with $\chi(H)=r$, the complete balanced $(r-1)$-partite graph on $N$ vertices is the unique extremal $H$-free graph for all sufficiently large $N$ if and only if $H$ has a color-critical edge (see also \citet{RobertsScott} and \citet{Gerbner} for related results). Our next theorem shows that the visibility graph of every sufficiently large planar point set with no four collinear points contains a copy of every fixed 6-chromatic graph having a color-critical edge.

\begin{theorem}\label{thm:six}
Let $H$ be a fixed graph with $\chi(H)=6$ and a color-critical edge.
There is an integer $N_H$ such that every finite point set
$P\subset\R^2$ with $|P|\ge N_H$ contains four collinear points or a copy of $H$ in $\V_P$.
\end{theorem}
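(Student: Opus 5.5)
Since $H$ has a color-critical edge and $\chi(H)=6$, $H$ is a subgraph of $K(t,t,t,t,t)$ with one extra edge inside one part. Equivalently, it suffices to find six groups of points in $\V_P$: two singletons $u,v$ that are visible to each other, together with four further groups $A_1,\dots,A_4$ of size $t=|V(H)|$. Every point of $\{u,v\}\cup A_1\cup\dots\cup A_4$ must see every point in the other groups. Concretely, take a proper $6$-colouring of $H\setminus e$ with $e=xy$, in which $x$ and $y$ receive the same colour class, say $C_0$. Put $x\mapsto u$, $y\mapsto v$, and map the remaining vertices of $C_0$ into a fifth group $A_0$ that sees everything except possibly itself. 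So the target is really a complete $5$-partite configuration $(A_0,\dots,A_4)$ in which $A_0$ also contains one visible pair. My plan is to run the machinery of the proof of Theorem~\ref{thm:five} and strengthen its conclusion: inside the part playing the role of $A_0$, we want one visible pair in addition to the cross-visibility.

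First I would invoke the proof of Theorem~\ref{thm:five} with a much larger parameter $T\gg t$. This yields, in any large $P$ without four collinear points, five groups $B_0,\dots,B_4$ of size $T$ that are pairwise completely visible across groups. Next I look inside $B_0$. If $B_0$ contains a visible pair $u,v$, then $\{u,v\}\cup(\text{$t$ further points of }B_0)$ together with $t$-subsets of $B_1,\dots,B_4$ gives the copy of $H$, because cross-visibility between groups is inherited by subsets. So the only bad case is that every $B_i$ is an independent set in $\V_P$.

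The main obstacle is this bad case, and here I would use geometry. Any two points of $B_0$ are separated by a point of $P$ on the open segment between them. Since there are no four collinear points, each such segment contains exactly one blocker. A large independent set in $\V_P$ with no four collinear points is very restricted. Take the closest pair $p,q$ in $B_0$ (or, more robustly, a pair whose segment contains no other point of $B_0$). Its blocker $r$ lies strictly between $p$ and $q$. I then want to show that $r$ can replace one of the singletons: $r$ sees $p$, since the segment $pr$ is part of $pq$ and has no interior point of $P$ because of no-four-collinear. The delicate part is ensuring that $r$ also sees most points of $B_1,\dots,B_4$. For this I would use that $B_0$ was extracted with extra structure, for example by Ramsey/order-type arguments that make $B_0$ lie near a convex curve or lie in a small region far from the other groups. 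Then segments from $r$ to the other groups run close to segments from $p,q$, which are unblocked.

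If direct inheritance fails, I would fall back on a counting argument. Each point $z\in B_1\cup\dots\cup B_4$ is blocked from $r$ by some point, and each such blocker lies on a line through $r$ that contains at most one further point. We then pass to subsets of size $t$ of each $B_i$ avoiding the few bad $z$'s, using that $T\gg t$ and iterating over many disjoint candidate triples $(p,r,q)$ in $B_0$. Pigeonhole should then give one triple for which $r$ is blocked from only $O(1)$ points per group. This gives the visible pair $\{p,r\}$ as required, completing the proof.
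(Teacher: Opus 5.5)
Your reduction is sound. $H$ embeds in $K_5(t)$ plus one edge inside a part. Two slips: you need a proper $5$-colouring of $H\setminus e$, not a $6$-colouring, and your first ``two singletons plus four groups'' description only captures subgraphs of $K_{1,1,t,t,t,t}$; your corrected target $(A_0,\dots,A_4)$ is the right one. Extracting $B_0,\dots,B_4$ from Theorem~\ref{thm:five} and finishing when some $B_i$ spans a visible pair is also fine.

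The gap is the bad case, where all $B_i$ are independent. All the content of the theorem lies there, and neither mechanism you offer closes it.
\begin{itemize}
\item The proximity heuristic fails because visibility is not stable under perturbation. The segment from the blocker $r$ to $z\in B_j$ can be blocked by any point of $P$ lying on it. The blocker $r$ need not belong to the cleaned configuration at all. No Ramsey or order-type cleaning of the $5T$ selected points controls where the points that might block $rz$ lie.
\item The counting fallback has no inequality behind it. That each line through $r$ carries at most one further point only means distinct blocked $z$'s lie on distinct lines through $r$. Nothing you say rules out every $z\in B_1\cup\cdots\cup B_4$ having its own private blocker on $rz$, simultaneously for every candidate triple $(p,r,q)$, since $|P|$ is unbounded in terms of $T$. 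So the pigeonhole step producing a triple with only $O(1)$ bad $z$'s per group is unsupported.
\end{itemize}

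There is also a structural obstruction. The conclusion forces $\chi(\V_P)\ge6$, so any proof must in particular show that large visibility graphs with no four collinear points are not $5$-colourable. That is a nontrivial fact: it is Theorem~\ref{thm:45}, which rests on empty hexagons. Since $K_5(T)$ is itself $H$-free, finding a bounded cross-complete configuration yields no such information, and your local blocker step never engages with it. For $H=K_6$ your argument would give Bonnet's theorem from Theorem~\ref{thm:five} by a purely local argument, which should be a warning sign.

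The paper supplies this global input and works with edge densities of whole intervals rather than a small substructure:
\begin{itemize}
\item Theorem~\ref{thm:45} plus the interval decomposition gives $\tau_5(\V_P[J])\ge\varepsilon_0|J|$ for every long interval $J$ (Lemma~\ref{lem:Z}).
\item Stability for colour-critical $H$ (Proposition~\ref{ppn:coloredge}) converts this into $|E(\V_P[J])|\le(\frac25-\beta)|J|^2$. Its proof contains your ``an edge inside a part embeds $H$'' step. There, though, the parts cover all but $o(|J|)$ vertices of a near-extremal graph, so independence of all parts gives an almost-complete $5$-colouring contradicting Lemma~\ref{lem:Z}. For your bounded $B_0,\dots,B_4$ the same conclusion is harmless.
\item The sliding-interval comparison (Lemmas~\ref{lem:interval} and~\ref{lem:density}) then bounds $n$.
\end{itemize}
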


The proof of Theorem~\ref{thm:six} is given in Section~\ref{sec:sixpf}. A high-level outline of the proof is given in Section \ref{sec:pf}.  Going beyond $K_6$, another natural example of a graph with chromatic number six and a color-critical edge is $K_7\setminus\{e\}$, the graph obtained by deleting one edge from the 7-clique $K_7$ (see Figure \ref{fig:H} (a)). Hence, the following corollary is  an immediate consequence of Theorem~\ref{thm:six}.

\begin{figure}[ht]
\centering
\begin{tikzpicture}[
  scale=1.125,
  vertex/.style={circle, draw=black, inner sep=0pt, minimum size=1.5mm},
  captionnode/.style={draw=none, fill=none}
]

\begin{scope}[xshift=0cm]

  \node[vertex, fill=red]            (a1) at ( 90:1.7) {};
  \node[vertex, fill=red]            (a2) at ({90+360/7}:1.7) {};
  \node[vertex, fill=blue]           (a3) at ({90+2*360/7}:1.7) {};
  \node[vertex, fill=green!70!black] (a4) at ({90+3*360/7}:1.7) {};
  \node[vertex, fill=orange]         (a5) at ({90+4*360/7}:1.7) {};
  \node[vertex, fill=purple]         (a6) at ({90+5*360/7}:1.7) {};
  \node[vertex, fill=cyan]           (a7) at ({90+6*360/7}:1.7) {};

  \draw (a1)--(a3); \draw (a1)--(a4); \draw (a1)--(a5); \draw (a1)--(a6); \draw (a1)--(a7);
  \draw (a2)--(a3); \draw (a2)--(a4); \draw (a2)--(a5); \draw (a2)--(a6); \draw (a2)--(a7);
  \draw (a3)--(a4); \draw (a3)--(a5); \draw (a3)--(a6); \draw (a3)--(a7);
  \draw (a4)--(a5); \draw (a4)--(a6); \draw (a4)--(a7);
  \draw (a5)--(a6); \draw (a5)--(a7);
  \draw (a6)--(a7);

  \draw[dashed] (a1)--(a2);

  \node[captionnode] at (0,-2.5) {(a) $K_7\setminus\{e\}$};

\end{scope}

\begin{scope}[xshift=7cm]

  \node[vertex, fill=red]            (b1) at ( 90:1.8) {};
  \node[vertex, fill=blue]           (b2) at ( 18:1.8) {};
  \node[vertex, fill=green!70!black] (b3) at (-54:1.8) {};
  \node[vertex, fill=red]            (b4) at (-126:1.8) {};
  \node[vertex, fill=blue]           (b5) at (162:1.8) {};

  \draw (b1)--(b2)--(b3)--(b4)--(b5)--(b1);

  \node[vertex, fill=orange] (c1) at (-0.65, 0.20) {};
  \node[vertex, fill=purple] (c2) at ( 0.65, 0.20) {};
  \node[vertex, fill=cyan]   (c3) at ( 0.00,-0.85) {};

  \draw (c1)--(c2)--(c3)--(c1);

  \foreach \u in {c1,c2,c3}{
    \foreach \v in {b1,b2,b3,b4,b5}{
      \draw (\u)--(\v);
    }
  }

  \node[captionnode] at (0,-2.5) {(b) $K_3 \vee C_5$};

\end{scope}

\end{tikzpicture}
\caption{ Graphs with chromatic number 6 and a color-critical edge. The vertex colors exhibit proper six-colorings. In (a), the dashed segment indicates the deleted edge. }
\label{fig:H}
\end{figure}

\begin{corollary}\label{cor:K}
Every sufficiently large finite point set $P\subset\R^2$ contains either four collinear points or a copy of $K_7\setminus\{e\}$ in $\V_P$. In particular, if no four
points of $P$ are collinear, then there are seven points of $P$ with at most one non-visible pair. 
\end{corollary}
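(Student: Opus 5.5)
The corollary follows directly from Theorem~\ref{thm:six} with $H=K_7\setminus\{e\}$. The plan has three steps: show that $\chi(K_7\setminus\{e\})=6$, show that some edge of $K_7\setminus\{e\}$ is color-critical, and then translate the resulting copy of $H$ in $\V_P$ into the geometric statement.

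Label the vertices of $K_7$ as $v_1,\dots,v_7$ and let the deleted edge be $e=v_1v_2$. Then $H=K_7\setminus\{e\}$ contains the clique on $\{v_2,\dots,v_7\}$, so $\chi(H)\ge 6$. Conversely, $v_1$ and $v_2$ are non-adjacent in $H$, so we may give them a common color and give $v_3,\dots,v_7$ five further distinct colors. This is a proper coloring with six colors, so $\chi(H)=6$.

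For color-criticality, take the edge $f=v_1v_3$. In $H\setminus f$, the pairs $\{v_1,v_2\}$ and $\{v_1,v_3\}$ are both non-adjacent, but $v_2v_3$ is still an edge. Instead of coloring one triple, we therefore use two color classes of size two: $\{v_1,v_2\}$ is independent in $H\setminus f$, and we need a second independent pair disjoint from it. In $H\setminus f$ the only non-edges are $v_1v_2$ and $v_1v_3$, so no independent pair avoids $v_1$, and five colors do not work with this choice of $f$.

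The fix is to choose a color-critical edge more carefully, and this is the one step that needs care. Take $f=v_3v_4$, which is disjoint from $e$. In $H\setminus f$ the classes $\{v_1,v_2\}$ and $\{v_3,v_4\}$ are both independent. Together with the singletons $\{v_5\},\{v_6\},\{v_7\}$ they form a proper $5$-coloring, so $\chi(H\setminus f)=5$ and $f$ is color-critical.

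Theorem~\ref{thm:six} now gives $N_H$ such that every $P$ with $|P|\ge N_H$ contains four collinear points or a copy of $H$ in $\V_P$. For the second statement, suppose no four points of $P$ are collinear. Then $\V_P$ contains a copy of $H$, whose seven vertices are seven points of $P$. Every pair of them other than the one corresponding to $e$ is an edge of $\V_P$, that is, visible in $P$. Hence these seven points have at most one non-visible pair.
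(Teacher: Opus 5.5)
Your proposal is correct and matches the paper. The paper deduces the corollary immediately from Theorem~\ref{thm:six}, after noting in Example~\ref{example:graph} that $K_7\setminus\{e\}=K_{1,1,1,1,1,2}$, whose color-critical edges are exactly those joining two singleton parts, such as your $f=v_3v_4$. The detour through $v_1v_3$ is unnecessary, and the lower bound $\chi(H\setminus f)\ge5$, which you leave implicit, holds because $\{v_2,v_3,v_5,v_6,v_7\}$ still spans a $K_5$.
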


\begin{remark}
{\em This result comes within a single visibility edge of the next open case, $(k,\ell)=(7,4)$, of the big-line-big-clique conjecture. Specifically, it shows that every sufficiently large planar point set $P$ with no four collinear points contains seven
points that are `almost' pairwise visible, that is, at least $20$ of the $\binom{7}{2}=21$ pairs are visible in $P$.  }  
\end{remark}

We now provide examples of different classes of 6-chromatic graphs containing a color-critical edge to illustrate the scope of Theorem \ref{thm:six}.

\begin{example}[Complete 6-partite graphs]\label{example:graph}
{\em A complete 6-partite graph has a color-critical edge if and only if at least two of its parts are singletons. More precisely, its color-critical edges are exactly those joining two singleton parts. Indeed, deleting such an edge merges the two singleton parts into one independent part, producing a complete 5-partite graph. Conversely, if
at least one endpoint of a deleted edge belongs to a part of size at least two, choosing a different representative from that part yields a $K_6$ that survives the deletion. Hence, the chromatic number remains six. Consequently, up to permuting the parts, the complete
6-partite graphs to which Theorem~\ref{thm:six} applies are
precisely $K_{1,1,a,b,c,d}$, where $a,b,c,d\ge1$. The simplest example is obtained by taking $a=b=c=d=1$, which gives $K_6$. The next natural case is $a=b=c=1$ and $d=2$, which gives the graph $K_7 \backslash \{e\}$ that is considered in Corollary \ref{cor:K}. Another important subfamily consists of the \emph{complete split graphs}
$$K_{1,1,1,1,1,t}=K_5\vee\overline{K_t},$$
for $t\ge1$.\footnote{For two graphs $G = (V(G), E(G))$ and $H= (V(H), E(H))$ on disjoint vertex sets, the \emph{join} of $G$ and $H$, denoted by $G\vee H$, is the graph with vertex set $V(G)\cup V(H)$ and edge set $E(G)\cup E(H) \cup \{ \{u,v\}: u\in V(G),\ v\in V(H)\}$. Hence, $G\vee H$ is obtained from the disjoint union of $G$ and $H$ by adding every edge between their vertex sets. } Such a graph consists of a $5$-clique and an independent set of size $t$, with all edges between the two sets present. Every edge within
the $5$-clique is color-critical. The cases $t=1$ and $t=2$ are $K_6$ and $K_7$ with one edge deleted, respectively. Thus, Theorem~\ref{thm:six} guarantees that, for every fixed $t$, every sufficiently large planar point set with no four collinear points contains five pairwise visible points having at least $t$ common neighbors in the ambient visibility graph. In contrast, $K_{1,2,2,2,2,2}$ and $K_{2,2,2,2,2,2}$
have no color-critical edge and therefore lie outside the
scope of Theorem~\ref{thm:six}. The corresponding containment
problem remains open for each of these graphs (see Question \ref{question} below).  }
\end{example}

\begin{example}[Generalized wheel]
{\em Another example of a 6-chromatic graph with a color-critical edge is shown in Figure \ref{fig:H} (b). This is the graph $H = K_3\join C_5$, the join of a triangle with a 5-cycle. Consequently, $\chi(H)=\chi(K_3)+\chi(C_5)=3+3=6$. Further, deleting an edge of the triangle lowers the chromatic number to $5$. This can be interpreted as a generalized wheel, with a 5-cycle forming the rim, together with three mutually adjacent hub vertices, each joined to every vertex of the rim. } 
\end{example}

\begin{example}[Beyond complete 6-partite examples]
{\em Let $H$ be obtained from the complete 5-partite graph $K_{3,2,2,2,2}$ by adding an edge $e=\{u, v\}$ inside its three-vertex part. Then $\chi(H)=6$, since
$H$ contains a $K_6$ and admits a proper 6-coloring. Moreover, $H\backslash \{e\}=K_{3,2,2,2,2}$ has chromatic number five, hence $e$ is a color-critical edge of $H$. Also, observe that $H$ is not a subgraph of any graph $K_{1,1,a,b,c,d}$, with $a,b,c,d\ge1$, which is the family considered in Example~\ref{example:graph}. Indeed, $H$ contains
vertex-disjoint copies of $K_6$ and $K_5$, whereas deleting the vertices of any $K_6$ from $K_{1,1,a,b,c,d}$ leaves a four-partite graph, which contains no $K_5$. 
This shows that the scope of Theorem~\ref{thm:six} is not just limited to subgraphs of the complete 6-partite family in Example \ref{example:graph}.  }  
\end{example}

\begin{remark}
{\em Every 6-chromatic graph on at most ten vertices has a color-critical edge. This is because, any proper 6-coloring of such a graph has at least two singleton color classes. Also, their vertices must be adjacent, and deleting their joining edge allows the two classes to be merged, reducing the chromatic number to five. This observation is also sharp, since $K_{1,2,2,2,2,2}$ has eleven vertices and chromatic number six, but deleting any edge leaves a $K_6$. Hence, eleven is the smallest possible order of a 6-chromatic graph with no color-critical edge. In particular, Theorem~\ref{thm:six} applies to all 6-chromatic graphs on at most ten vertices.  }  
\end{remark}

A question that remains is whether the color-critical-edge assumption can be removed from Theorem \ref{thm:six}.

\begin{question}\label{question}
Does the visibility graph of every sufficiently large planar point set with no four collinear points contain a copy of every fixed graph $H$ with $\chi(H)=6$?
\end{question}

\subsection{Overview of the Proofs and New Technical Ingredients}
\label{sec:pf}

To prove Theorem~\ref{thm:five}, we suppose, for a contradiction, that $P$ is a sufficiently large planar point set with no four collinear points and that $\V_P$ is $H$-free, where $H$ is a fixed graph with $\chi(H)\le5$. Following \citet{Bonnet}, we order $P$ by a generic linear projection, so that every consecutive interval retains its ambient visibility graph (Observation~\ref{obs:J}). The Erd\H{o}s--Stone--Simonovits theorem \cite{ErdosSimonovits,ErdosStone} then provides a constant $0<\rho< \frac{2}{5}$ such that every sufficiently long interval (in terms of the projection order) $J$ of $P$ contains at most $\rho|J|^2$ visible pairs. Then a counting argument shows that a point set with these properties must have bounded size (Lemma~\ref{lem:density}), which leads to a contradiction. The main ingredient in this counting argument is a new sliding-interval comparison between visible and non-visible pairs (Lemma~\ref{lem:interval}). It shows that the total number of non-visible pairs, counted with multiplicity over all sliding intervals of length $2K$, is at most the total number of visible pairs, counted with multiplicity over all sliding intervals of length $K$.

To prove Theorem~\ref{thm:six}, we begin by assuming, for a contradiction, that $P$ is a sufficiently large planar point set with no four collinear points and that $\V_P$ is $H$-free, where $H$ is a fixed graph with $\chi(H)=6$ and a color-critical edge. In this case, the Erd\H{o}s--Stone--Simonovits theorem gives an asymptotic upper bound of $(\frac{2}{5}+o(1))N^2$ on the number of edges in an $N$-vertex $H$-free graph, so a further argument is needed to obtain a fixed deficit below the $\frac{2}{5}$ threshold. 
Towards this, following \citet{Bonnet}, we first invoke a result of \citet{HujterKisfaludiBak}, which shows that planar point sets with no four collinear points and $5$-colorable visibility graphs have bounded size. Bonnet's interval-coloring argument \cite[Lemma 1]{Bonnet} then shows that the induced visibility graph on every sufficiently long interval requires the deletion of a fixed positive
proportion of its vertices to become $5$-colorable (see Lemma~\ref{lem:Z}). The new external input is a vertex-removal stability result for $H$-free graphs when $H$ has a color-critical edge~\cite{Gerbner,RobertsScott}. Combined with this deletion bound, we obtain a constant $\beta>0$, depending only on $H$, such that every sufficiently long
interval $J$ of $P$ contains at most $\left(\frac{2}{5} - \beta\right)|J|^2$ visible pairs.
The same sliding-interval comparison used in the proof of Theorem~\ref{thm:five}, then completes the argument.

The sliding-interval comparison replaces the harmonic-potential argument in Bonnet's proof~\cite{Bonnet}. This replacement also simplifies his original proof of the existence of a visible $K_6$ and improves the dependence of the resulting bound on the interval-density parameters. We illustrate this quantitative gain in Appendix~\ref{sec:count} by improving the explicit bound in \cite[Theorem~1]{Bonnet} on the number of points sufficient to guarantee four collinear points or six pairwise visible points.

\subsection{AI Declaration }

This paper grew out of an AI-assisted effort to further explore the consequences of the ideas developed by \citet{Bonnet}. The authors proposed investigating the existence of almost cliques and, more generally, copies of fixed graphs with chromatic number $5$ or $6$ in visibility graphs. Building on these ideas, GPT-6 Astra produced the initial proofs of Theorems~\ref{thm:five} and~\ref{thm:six} through a series of iterative exchanges with the authors. The authors subsequently revised the arguments and supplied additional details, examples, and contextual discussion. The authors assume responsibility for all content.

\section{Preliminaries} 
\label{sec:interval}

Throughout, $P\subset\R^2$ will be a finite set of $n$ points with no four collinear. Fix a non-vertical line $L$ such that the orthogonal projection $\pi$ of $P$ onto $L$ gives $|P|$ distinct points. We enumerate the points in $P$ as $p_1, p_2, \ldots, p_n$ by increasing order of their projections on $L$, that is,  $\pi(p_1) < \pi(p_2) < \cdots < \pi(p_n)$. Any set of consecutive points $\{p_u , p_{u+1}, \ldots, p_v\}$, with $1 \leq u \leq v \leq n$, will be referred to as an {\it interval} of $P$. Also, a point $r \in P$ will be called  a {\it blocker} for the points $p, q \in P$, if $r$ lies on the open line segment joining $p$ and $q$. 
Further, for any graph $H = (V(H), E(H))$ and $S \subseteq V(H)$, we denote by $H[S]$ the induced subgraph of $H$ with vertex set $S$. We begin by recalling the following observation from \cite{Bonnet}.

\begin{observation}[{\cite[Observation~1]{Bonnet}}]
\label{obs:J}
For every interval $J\subseteq P$, $\V_J = \V_P[J]$.
\end{observation}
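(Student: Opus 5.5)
The plan is to show that the two graphs $\V_J$ and $\V_P[J]$ have the same edges by checking, for an arbitrary pair $p,q\in J$, that $p$ and $q$ are visible in $J$ exactly when they are visible in $P$. Both graphs have vertex set $J$, so nothing more is needed. The whole argument rests on one geometric fact about the projection $\pi$ onto $L$: it is monotone along segments.

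One direction is immediate. If $p,q$ are visible in $P$, then the open segment $pq$ contains no point of $P$. Since $J\subseteq P$, it contains no point of $J$ either, so $p,q$ are visible in $J$.

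For the converse, suppose $p,q\in J$ are not visible in $P$, and let $r\in P$ be a blocker, so $r$ lies on the open segment $pq$.
\begin{itemize}
\item Write $r=(1-t)p+tq$ with $0<t<1$. Orthogonal projection onto $L$ is affine, so $\pi(r)=(1-t)\pi(p)+t\pi(q)$.
\item Because the projections of the points of $P$ are pairwise distinct, $\pi(p)\neq\pi(q)$. Hence $\pi(r)$ lies strictly between $\pi(p)$ and $\pi(q)$.
\item Say $p=p_i$ and $q=p_j$ with $i<j$. The enumeration is by increasing projection, so $r=p_k$ for some $i<k<j$.
\item Since $J$ is a set of consecutive indices containing $i$ and $j$, it also contains $k$. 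So $r\in J$ blocks $p,q$ in $J$, and $p,q$ are not visible in $J$.
\end{itemize}

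This proves the equivalence, and therefore $\V_J=\V_P[J]$. There is no real obstacle here. The only point needing care is to use distinctness of the projections, which guarantees $\pi(p)\neq\pi(q)$ and so makes "strictly between" meaningful. The hypothesis of no four collinear points is not needed.
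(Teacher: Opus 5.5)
Your proposal is correct and follows the paper's proof: the trivial direction from $J\subseteq P$, and for the converse, strict betweenness of the blocker's projection forces its index between those of the endpoints, hence inside the interval $J$. You spell out why the projection preserves strict betweenness (it is affine, and the projections are distinct), which the paper only asserts. You are also right that the no-four-collinear hypothesis is not needed here.
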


\begin{proof}
Clearly, if two points of $J$ are visible in $P$, they remain visible in $J$. Conversely, suppose that $p_u, p_ v\in J$ have a blocker $p_w \in P$. Since projection on $L$
preserves strict betweenness, the index $w$ must lie between $u$ and $v$ (after interchanging $u$ and $v$, if necessary). Hence, $p_w \in J$, and $p_u $ and $p_v$ are also non-visible in $J$.
\end{proof}

The previous observation shows that the visibility graph of any interval $J$ of $P$ is, in fact, the induced subgraph of the visibility graph of $P$ on the set $J$. The next observation shows that $\V_P$ has a unique-blocker structure, since no four points of $P$ are collinear.

\begin{observation}\label{obs:structure}
If $\{p_u, p_v\}$ is a non-edge of $\V_P$, with $1 \leq u < v \leq n$, then there is a unique $w \in \{1, 2, \ldots, n\}$, with $u< w < v$, such that $p_w$ blocks $p_u, p_v$. Further, $\{p_u, p_w\}$ and $\{p_w, p_v\}$ are edges of $\V_P$, and the line through $p_u$ and $p_v$ contains exactly these three points of $P$.
\end{observation}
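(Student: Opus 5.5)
Since $\{p_u,p_v\}$ is a non-edge of $\V_P$, by definition there is some point of $P$ on the open segment between $p_u$ and $p_v$. The plan is to show that this blocker is unique, that its index lies strictly between $u$ and $v$, and that the line through $p_u,p_v$ meets $P$ in exactly three points. The last two edge claims will then follow.

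First, pick any blocker $r\in P$ on the open segment $p_up_v$, and write $r=(1-t)p_u+tp_v$ with $0<t<1$. The projection $\pi$ onto $L$ is affine, so $\pi(r)=(1-t)\pi(p_u)+t\pi(p_v)$. This lies strictly between $\pi(p_u)<\pi(p_v)$, so $r=p_w$ for some index $w$ with $u<w<v$.

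Next, uniqueness and the collinearity claim. Every blocker lies on the line $\ell$ through $p_u$ and $p_v$. If there were two distinct blockers, then $\ell$ would contain at least four points of $P$, contradicting the standing assumption that no four points of $P$ are collinear. Hence the blocker $p_w$ is unique. By the same reasoning, $\ell\cap P$ contains $p_u,p_w,p_v$ and nothing else, since a fourth point on $\ell$ would again give four collinear points.

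Finally, $p_u$ and $p_w$ are visible. Any point of $P$ on the open segment $p_up_w$ lies on $\ell$, so it would be a point of $\ell\cap P$ other than $p_u,p_w$. It also differs from $p_v$, because $p_v$ lies outside the segment $p_up_w$. This contradicts $\ell\cap P=\{p_u,p_w,p_v\}$. The argument for $\{p_w,p_v\}$ is symmetric.

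There is no real obstacle here. The only step needing care is checking that strict betweenness is preserved under the projection, and this is immediate from the affineness of $\pi$ together with the injectivity of $\pi$ on $P$.
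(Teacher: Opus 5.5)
Your proof is correct and follows essentially the same route as the paper: no four collinear points forces a unique blocker and exactly three points of $P$ on the line, which in turn makes both subpairs visible. The betweenness of the index $w$ comes from the projection preserving strict betweenness, as the paper notes in Observation~\ref{obs:J}. You simply spell out these steps, namely affineness of $\pi$ and why $p_v$ cannot lie on the segment $p_up_w$, in more detail than the paper's two-line proof.
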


\begin{proof}
Since $p_u$ and $p_v$ are non-visible and no four points of $P$ are collinear,  a unique blocker $p_w$, with $u< w < v$, exists, and the line joining $p_u$ and $p_v$ has no further point of $P$. Consequently, both pairs $\{p_u, p_w\}$ and $\{p_w, p_v\}$ are visible.
\end{proof}

\section{A Sliding Interval Comparison}

A key ingredient of the proofs is a sliding-interval comparison between visible and non-visible pairs. To this end, for $1\le r\le n$ and $1\le s\le n-r+1$, let
$$J_r(s)=\{p_s,p_{s+1},\ldots,p_{s+r-1}\}  $$
be the interval consisting of the $r$ consecutive points of $P$
starting at $p_s$. Define
\begin{equation}\label{eq:Q}
\begin{aligned}
\mathcal Q_r &=\sum_{s=1}^{n-r+1} |E(\V_P[J_r(s)])|  ,  \\
\overline{\mathcal Q}_r&=\sum_{s=1}^{n-r+1} \left(\binom{r}{2}-|E(\V_P[J_r(s)])|\right)  .  
\end{aligned}
\end{equation}
Note that $|E(\V_P[J_r(s)])|$ and $\binom{r}{2}-|E(\V_P[J_r(s)])|$ count the number of visible and non-visible unordered pairs with both endpoints in $J_r(s)$,
where visibility is taken with respect to the full set $P$,  respectively. Consequently, $\mathcal Q_r$ and $\overline{\mathcal Q}_r$ are the counts of visible and non-visible unordered pairs aggregated over all intervals of length $r$, respectively. Note that a pair (visible or non-visible) is counted once for every sliding interval of length $r$
that contains both of its endpoints.


\begin{lemma}\label{lem:interval}
For every positive integer $K$ with $2 K \le n$,
\begin{equation}\label{eq:interval}
\overline{\mathcal{Q}}_{2K}\le \mathcal{Q}_K.
\end{equation}
\end{lemma}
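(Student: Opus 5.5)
The plan is to compare the two sums pair by pair: charge each non-visible pair to the two visible pairs created by its blocker, and then compare the window multiplicities directly. For $r\le n$ and $a<b$, write $\mathcal W_r(a,b)=\{s: 1\le s\le n-r+1,\ s\le a,\ b\le s+r-1\}$ for the set of length-$r$ windows containing $p_a,p_b$. By definition,
$$\overline{\mathcal Q}_{2K}=\sum_{\{p_u,p_v\}\notin E(\V_P)}|\mathcal W_{2K}(u,v)|\qquad\text{and}\qquad \mathcal Q_K=\sum_{\{p_a,p_b\}\in E(\V_P)}|\mathcal W_K(a,b)|.$$

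\textbf{Step 1 (charging map).} Let $\{p_u,p_v\}$ with $u<v$ be a non-visible pair. By Observation~\ref{obs:structure} it has a unique blocker $p_w$ with $u<w<v$, and both $\{p_u,p_w\}$ and $\{p_w,p_v\}$ are visible. Assign these two visible pairs to $\{p_u,p_v\}$.

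I claim that each visible pair $\{p_a,p_b\}$ is assigned to at most one non-visible pair, counting both roles. If it is assigned, then the line through $p_a,p_b$ contains exactly one further point $x$ of $P$, since no four points are collinear. Because $p_a,p_b$ are visible, $x$ does not lie between them, so $x$ lies beyond $p_a$ or beyond $p_b$. The only non-visible pair on that line is then the one formed by $x$ and the farther of $p_a,p_b$, blocked by the nearer one. Hence the non-visible pair is determined by $\{p_a,p_b\}$. Consequently it suffices to prove, for each non-visible pair $u<w<v$,
$$|\mathcal W_{2K}(u,v)|\le |\mathcal W_K(u,w)|+|\mathcal W_K(w,v)|,$$
and then sum over all non-visible pairs.

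\textbf{Step 2 (window comparison).} I will prove the displayed inequality with an explicit map. Take $s\in\mathcal W_{2K}(u,v)$, so $s\le u<w<v\le s+2K-1\le n$. Split that window into its two halves of length $K$, starting at $s$ and at $s+K$.

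If $w\le s+K-1$, send $s\mapsto s$. Then $s\le u$ and $w\le s+K-1$, and $s+K-1\le n$, so $s\in\mathcal W_K(u,w)$.

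Otherwise $w\ge s+K$, and I send $s\mapsto s+K$. Then $s+K\le w$, $v\le (s+K)+K-1$, and $(s+K)+K-1\le n$, so $s+K\in\mathcal W_K(w,v)$.

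Each of the two branch maps is injective, being a translation. Hence the number of windows $s$ in the first branch is at most $|\mathcal W_K(u,w)|$, and the number in the second branch is at most $|\mathcal W_K(w,v)|$. This gives the inequality.

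\textbf{Conclusion and main obstacle.} Summing over non-visible pairs, and using that the assigned visible pairs are pairwise distinct by Step 1, yields $\overline{\mathcal Q}_{2K}\le\mathcal Q_K$. The only delicate point is the uniqueness claim in Step 1, that a visible pair cannot serve as a half of two different non-visible pairs. This relies on the line through the pair containing at most one further point, which is exactly where the hypothesis of no four collinear points enters. The boundary behaviour of the windows is handled automatically by the half-window map, so no explicit multiplicity formulas are needed.
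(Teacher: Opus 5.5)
Your proposal is correct and is essentially the paper's argument. The paper uses the same half-window map sending $(\{p_u,p_v\},J_{2K}(s))$ to $(\{p_u,p_w\},J_K(s))$ or $(\{p_w,p_v\},J_K(s+K))$, and it proves injectivity by the same observation that a visible pair determines its three-point line and hence the blocked triple. You only factor that injectivity into a pair-level uniqueness claim (Step 1) and a per-pair window inequality (Step 2), which is equivalent bookkeeping.
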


\begin{proof}
Note that an object counted in $\overline{\mathcal{Q}}_{2K}$ consists of a non-visible pair $\{p_u, p_v\}$, with $1 \leq u < v \leq n$, and an interval $J_{2K}(s)$ containing both endpoints. Let $p_w \in P$ be the unique blocker of $\{p_u, p_v\}$. By Observation \ref{obs:structure}, $1 \leq u < w < v $ and both subpairs $\{ p_u, p_w\}$ and $\{ p_w, p_v\}$ are visible. Now, split the interval $J_{2K}(s)$ into two consecutive (disjoint) intervals: $J_{2K}(s)=J_K(s) \cup J_K(s+K)$. Note that, if $w \le s+K-1$, then $p_u$ and $p_w$ are both in $J_K(s) $, and if $w \ge s+K$, then $p_w$ and $p_v$ are both in $J_K(s+K)$. To prove \eqref{eq:interval} we now construct an injective map from objects counted in $\overline{\mathcal{Q}}_{2K}$ to objects counted in $\mathcal{Q}_{K}$ as follows: 
\begin{equation}\label{eq:Kls}
\psi(\{p_u,p_v\},J_{2K}(s))=
\begin{cases}
(\{p_u,p_w\},J_K(s)),& w \le s+K-1,\\ 
(\{p_w,p_v\},J_K(s+K)),& w \ge s+K.
\end{cases}
\end{equation}
Note that every image is an occurrence counted by $\mathcal{Q}_K$. To show that $\psi$ is injective, suppose that
$$\psi(\{p_u,p_v\},J_{2K}(s)) = \psi(\{p_{u'},p_{v'}\},J_{2K}(s')),$$ and let $p_w$ and $p_{w'}$ be the unique blockers of $\{p_u,p_v\}$ and $\{p_{u'},p_{v'}\}$, respectively. The common image contains a visible pair whose supporting line contains
both triples $\{p_u,p_w,p_v\}$ and $\{p_{u'},p_{w'},p_{v'}\}$. Since no four points of $P$ are collinear, these triples must
coincide. Moreover, the projection order gives $u<w<v$ and $u'<w'<v'$, which means, 
$u=u'$, $w=w'$, and $v=v'$. Further, the two visible subpairs $\{p_u,p_w\}$ and $\{p_w,p_v\}$ are distinct. Thus equality of the image pairs implies that both
preimages are mapped using the same case in
\eqref{eq:Kls}. If both use the first case, equality
of the image intervals gives $J_K(s)=J_K(s')$. If both use the second case, it gives
$J_K(s+K)=J_K(s'+K)$. In either case, the starting indices of the equal sliding
intervals coincide, and hence $s=s'$. Therefore, the two preimages are identical, proving that $\psi$ is injective. From this, it follows that $\overline{\mathcal{Q}}_{2K}\le\mathcal{Q}_K$, completing the proof of Lemma \ref{lem:interval}.  
\end{proof}

\begin{remark}  
{\em The above lemma shows that the total number of non-visible pairs, counted with multiplicity over all sliding intervals of length $2K$, is at most the total number of visible pairs counted with multiplicity over all sliding intervals of length $K$. The proof uses the properties of the projection order and the fact that each non-visible pair belongs to a unique three-point line. If four collinear points are allowed, a visible subpair can lie on several blocked pairs on the same line, and the reconstruction above need not be unique.  }  
\end{remark}

\section{Proof of Theorem \ref{thm:five}} 
\label{sec:fivepf}

We begin by showing that a point set with no four collinear points cannot be arbitrarily large if every sufficiently long sliding interval $J$ contains at most $\rho|J|^2$ visible pairs, for some fixed $0 < \rho< \frac{2}{5} $.

\begin{lemma}\label{lem:density}
Suppose $0<\rho<\frac{2}{5}$ and $K_0$ is a positive integer such that $|E(\V_P[J])|\le\rho |J|^2$, for every interval $J$ with $|J|\ge K_0$. Let $K \ge K_0$ be an integer satisfying $K > \frac{1}{2-5\rho}$. Then 
\begin{equation}\label{eq:density}
n\le 2K-1+\frac{\rho K^2}{(2-5\rho)K-1}   .  
\end{equation}  
\end{lemma}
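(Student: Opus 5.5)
The plan is to apply Lemma~\ref{lem:interval} once with the given $K$. I would bound $\overline{\mathcal Q}_{2K}$ from below and $\mathcal Q_K$ from above using the density hypothesis, and then solve the resulting linear inequality for $n$.

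First, dispose of the trivial case. If $n<2K$, then $n\le 2K-1$, and \eqref{eq:density} holds because its last term is nonnegative: $K>\frac1{2-5\rho}$ gives $(2-5\rho)K-1>0$. So assume $2K\le n$, which makes Lemma~\ref{lem:interval} applicable. Write $m=n-2K+1\ge1$ for the number of sliding intervals of length $2K$; then there are $n-K+1=m+K$ sliding intervals of length $K$.

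\emph{Upper bound on $\mathcal Q_K$.} Each interval $J_K(s)$ has $|J_K(s)|=K\ge K_0$. By hypothesis it therefore contains at most $\rho K^2$ visible pairs, so $\mathcal Q_K\le (m+K)\rho K^2$.

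\emph{Lower bound on $\overline{\mathcal Q}_{2K}$.} Each interval $J_{2K}(s)$ has length $2K\ge K_0$, so it contains at most $\rho(2K)^2$ visible pairs. It therefore contains at least $\binom{2K}{2}-4\rho K^2=K(2K-1)-4\rho K^2$ non-visible pairs, which gives
\[
\overline{\mathcal Q}_{2K}\ge m\bigl(K(2K-1)-4\rho K^2\bigr).
\]

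Combining these with Lemma~\ref{lem:interval} and dividing by $K$ yields
\[
m\bigl((2-4\rho)K-1\bigr)\le (m+K)\rho K .
\]
Moving the $m\rho K$ term to the left gives $m\bigl((2-5\rho)K-1\bigr)\le \rho K^2$. Since $(2-5\rho)K-1>0$ by the choice of $K$, we can divide to get $m\le \frac{\rho K^2}{(2-5\rho)K-1}$. Substituting $m=n-2K+1$ gives exactly \eqref{eq:density}.

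There is no real obstacle; the substance is entirely in Lemma~\ref{lem:interval}. The only points needing care are the following.
\begin{itemize}
\item Both interval lengths $K$ and $2K$ must be at least $K_0$, which holds since $K\ge K_0$.
\item The number of sliding intervals differs between the two sides ($m$ versus $m+K$). This discrepancy produces the additive $2K-1$ and the $-1$ in the denominator.
\item The condition $K>\frac1{2-5\rho}$ is precisely what makes the final division legitimate. This is also where $\rho<\frac25$ is used.
\end{itemize}
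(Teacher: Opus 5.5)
Your proof is correct and follows the paper's argument: the trivial case $n\le 2K-1$, the bounds $\mathcal Q_K\le (n-K+1)\rho K^2$ and $\overline{\mathcal Q}_{2K}\ge (n-2K+1)\bigl((2-4\rho)K^2-K\bigr)$, Lemma~\ref{lem:interval}, and the rewriting $n-K+1=(n-2K+1)+K$ before dividing by $(2-5\rho)K-1>0$. One small inaccuracy, confined to your closing commentary: the $-1$ in the denominator comes from $\binom{2K}{2}=2K^2-K$, while the mismatch between $m$ and $m+K$ intervals is what produces the numerator $\rho K^2$.
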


\begin{proof}
If $n \leq 2K-1$, \eqref{eq:density} trivially holds. Hence, we can assume $n \geq 2K$. Note that there are exactly $n-K+1$ intervals of length $K$. Therefore, from the hypothesis in Lemma \ref{lem:density} and recalling \eqref{eq:Q} gives, 
\begin{equation*}
\mathcal{Q}_K\le(n- K +1)\rho K^2.
\end{equation*}
Also, each of the $n- 2 K   +1$ intervals of length $2 K   $ contains at least $\binom{2 K   }{2}- 4 \rho K ^2=(2-4\rho)K^2- K$ non-visible pairs. Therefore, once again recalling \eqref{eq:Q}, 
\begin{equation*}
\overline{\mathcal{Q}}_{2K}\ge(n- 2 K   +1)((2-4\rho)K^2- K ).
\end{equation*}
Combining the above with Lemma~\ref{lem:interval} gives, 
$$(n- 2 K   +1)((2-4\rho)K^2- K )
\le(n- K +1)\rho K^2.$$
Using $n-K+1=(n-2K+1)+K$, the preceding inequality becomes
$$(n-2K+1)\bigl((2-4\rho)K^2-K\bigr)
\le (n-2K+1)\rho K^2+\rho K^3.$$
Subtracting $(n-2K+1)\rho K^2$ from both sides and dividing by $K$ gives, 
$$(n-2K+1)\bigl((2-5\rho)K-1\bigr)
\le\rho K^2.$$
The assumption $K> \frac{1}{2-5\rho}$ ensures that $(2-5\rho)K-1>0$. Therefore, dividing by $(2-5\rho)K-1$ and rearranging terms, the result in Lemma \ref{lem:density} follows. 
\end{proof}

\begin{remark}\label{remark:interval}
{\em The parameter $\frac{2}{5}$ is the exact threshold for the counting in the above lemma: the leading terms compare $2-4\rho$ with $\rho$. The strict inequality $\rho<\frac{2}{5}$ is what turns the comparison into an upper bound on $n$.  }  
\end{remark}

Now, let $H = (V(H), E(H))$ be a fixed graph with chromatic number $1 \leq \chi(H) \leq 5$. Suppose, for a contradiction, that $\V_P$ is $H$-free. For an integer $a \geq 1$, denote by $K_5(a)$ the complete balanced $5$-partite graph with
$a$ vertices in each part. Choose a proper coloring of $H$ with at most five
colors, allowing empty color classes. Then by mapping each color class injectively
into one part of $K_5(|V(H)|)$ shows that $H\subseteq K_5(|V(H)|)$. Hence, every $H$-free graph is $K_5(|V(H)|)$-free. In particular, $\V_P$ is $K_5(|V(H)|)$-free. The following result is the classical Erd\H{o}s--Stone--Simonovits Theorem \cite{ErdosSimonovits,ErdosStone}, specialized to the case of the complete 5-partite graph.

\begin{theorem}[{\cite{ErdosSimonovits,ErdosStone}}] 
\label{thm:partite}
Fix $\varepsilon>0$ and an integer $a \geq 1$. Then, there exists $N(\varepsilon, a) \geq 1$ such that every $K_5(a)$-free graph $G = (V(G), E(G))$, with $|V(G)| \geq N(\varepsilon, a)$ satisfies
$$|E(G)| \le\left(\frac{3}{8}+\varepsilon\right)|V(G)|^2.$$
\end{theorem}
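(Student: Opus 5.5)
The plan is to derive this specialization of the Erd\H{o}s--Stone--Simonovits theorem from two classical ingredients: the Tur\'an theorem for $K_6$, and the Erd\H{o}s--Stone supersaturation statement. The constant $\frac{3}{8}$ should be read as $\frac{1}{2}\left(1-\frac{1}{4}\right)$, which is the Tur\'an density for $K_5$-free graphs. Indeed, $K_5(a)$ has chromatic number $5$, so the extremal density is $\frac12(1-\frac{1}{5-1})=\frac38$. Note that a $K_5(a)$-free graph may contain many $K_5$'s, so Tur\'an alone does not suffice. The statement is exactly the Erd\H{o}s--Stone theorem with $r=4$: every graph with $|E(G)|\ge(\frac{1}{2}(1-\frac1{4})+\varepsilon)|V(G)|^2$ and $|V(G)|$ large contains $K_{5}(a)$.

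The steps are as follows. First, I will apply supersaturation. If $|E(G)|\ge(\frac38+\varepsilon)N^2$, then by the Erd\H{o}s--Simonovits supersaturation lemma (iterated Tur\'an on random subsets of size $m$, with $m$ fixed depending on $\varepsilon$), $G$ contains at least $c(\varepsilon)N^5$ copies of $K_5$. Concretely, I will average over $m$-subsets: a positive proportion of them have edge density above the Tur\'an threshold $\frac38+\frac{\varepsilon}{2}$ (times $m^2$), hence contain a $K_5$. Each $K_5$ lies in $\binom{N-5}{m-5}$ subsets, which gives $\Omega(N^5)$ copies.

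Second, I will convert many $K_5$'s into a $K_5(a)$. Consider the $5$-uniform hypergraph of $5$-cliques, which has positive density $c(\varepsilon)$. By Erd\H{o}s's theorem on complete $5$-partite $5$-uniform hypergraphs (via iterated Cauchy--Schwarz/K\H{o}v\'ari--S\'os--Tur\'an), for $N$ large it contains a complete $5$-partite hypergraph $K^{(5)}(a,\dots,a)$. Its vertex classes $V_1,\dots,V_5$ have the property that every transversal $(v_1,\dots,v_5)$ forms a clique in $G$. In particular, every pair across distinct classes is an edge, so $G\supseteq K_5(a)$.

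The main obstacle is the second step: obtaining the blow-up from positive clique density. I would cite it directly from \cite{ErdosStone,ErdosSimonovits} rather than reprove it, since the paper treats the theorem as a classical input. If a self-contained argument is wanted, I would use the standard induction on the number of parts. Given $K_{r}(t)$ with $t\gg a$ found inside the clique-rich structure, the supersaturation count shows that many outside vertices are adjacent to $a$ vertices in each part. By pigeonhole over the $\binom{t}{a}^{r}$ choices, $a$ such vertices share the same neighbourhood sets, which yields $K_{r+1}(a)$.
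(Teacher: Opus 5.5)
Your proof is correct, but it is not what the paper does. The paper gives no proof of this statement: it quotes the Erd\H{o}s--Stone--Simonovits theorem for the $5$-chromatic graph $K_5(a)$ and uses it only as a black box to get the interval density $\rho=\frac{23}{60}<\frac25$.

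You instead reconstruct the classical Erd\H{o}s--Simonovits derivation. Averaging Tur\'an's bound $\mathrm{ex}(m,K_5)\le\frac38m^2$ over random $m$-subsets yields $\Omega_\varepsilon(N^5)$ copies of $K_5$; indeed at least an $\frac{\varepsilon}{2}$-fraction of $m$-sets have more than $(\frac38+\frac{\varepsilon}{2})m^2$ edges once $m$ is large in terms of $\varepsilon$. Erd\H{o}s's theorem on complete $5$-partite $5$-graphs, applied to the $5$-clique hypergraph, then gives classes $V_1,\dots,V_5$ of size $a$ whose transversals are all cliques, hence a $K_5(a)$.

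Two small corrections. First, the Tur\'an input is for $K_5$, not $K_6$ as your first sentence says; the $K_6$ bound gives $\frac25$, which is exactly the threshold the paper must stay below. Second, in your optional self-contained sketch, the claim that many outside vertices have $a$ neighbours in every part of a large $K_4(t)$ does not come from the $K_5$-supersaturation count. It comes from a degree count after passing to a subgraph $G'$ of minimum degree at least $(\frac34+\varepsilon')|V(G')|$.

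The citation suffices for the paper, since only the qualitative statement is needed. Your route makes the origin of $\frac38$ explicit and shows the result follows from Tur\'an's theorem plus a hypergraph K\H{o}v\'ari--S\'os--Tur\'an bound.
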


For the proof of Theorem~\ref{thm:five}, apply
Theorem~\ref{thm:partite} with $\varepsilon= \frac{1}{120}$ and
$a=|V(H)|$. This gives an integer $M_H\ge5|V(H)|$ such that
every $K_5(|V(H)|)$-free graph $G$ with $|V(G)|\ge M_H$
satisfies
$$|E(G)|\le\rho |V(G)|^2,$$
with $\rho=\frac{23}{60}=\frac38+\frac1{120}<\frac25$. Define $K_H=\max\{M_H,60\}$ and $N_H=8K_H$, and suppose that $n=|P|\ge N_H$. Since $\chi(H)\le5$, we have $H\subseteq K_5(|V(H)|)$. Thus, $\V_P$, and hence every induced interval graph $\V_P[J]$, is $K_5(|V(H)|)$-free. Consequently, $|E(\V_P[J])|\le\rho |J|^2$, for every interval $J$ with $|J|\ge M_H$. Since $n\ge8K_H$, sliding intervals of both lengths $K_H$ and $2K_H$ exist. Also, $K_H\ge M_H$ and $K_H\ge 60>12=\frac1{2-5\rho}$. Hence, applying Lemma~\ref{lem:density} with $K=K_H$ and $K_0=M_H$ we obtain, 
$$ n \le 2K_H-1+ \frac{\rho K_H^2}{(2-5\rho)K_H-1} \le 2K_H-1+  \frac{\frac{23}{60}K_H^2}{\frac{1}{15} K_H} = \frac{31}{4}K_H-1 < 8K_H=N_H  ,  $$
contradicting $n\ge N_H$.  \hfill $\Box$

\section{Proof of Theorem \ref{thm:six}}
\label{sec:sixpf}

For a fixed 6-chromatic graph $H$, the Erd\H{o}s--Stone--Simonovits theorem gives the leading extremal edge density coefficient of $\frac{2}{5}$, which does not provide the strict
deficit required by Lemma \ref{lem:density}. For this we need additional ingredients. The first is
the following bound on the size of planar point sets with no four collinear points and $5$-colorable visibility graphs.

\begin{theorem}[\citet{HujterKisfaludiBak}]\label{thm:45}
Every finite planar point set of size at least $2311$ contains four
collinear points or has a visibility graph with chromatic number at
least six.
\end{theorem}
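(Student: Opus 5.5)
This result is imported from \citet{HujterKisfaludiBak}, so in the paper it is invoked rather than proved. If I were to reprove it, I would use a recursive \emph{neighborhood} argument: the visible neighborhood of a point should contain a structure whose own visibility graph needs many colors. Throughout, let $P$ have no four collinear points, and suppose for contradiction that $\V_P$ is properly $5$-colored with $|P|\ge 2311$.

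\textbf{Step 1 (degree bound).} Fix $p\in P$ and sweep a ray around $p$. On each line through $p$ there are at most two further points of $P$, and the first point hit on each ray is visible from $p$. Hence $\deg_{\V_P}(p)\ge (|P|-1)/2$.

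\textbf{Step 2 (structure of the neighborhood).} Let $N(p)=\{q_1,\dots,q_m\}$ be the visible neighbors of $p$ in angular order around $p$. For consecutive $q_i,q_{i+1}$ with angle less than $\pi$, the open triangle $pq_iq_{i+1}$ contains no point of $P$, since any such point would be hit first by some intermediate ray. So $q_i$ and $q_{i+1}$ are visible unless some point of $P$ lies on the segment $q_iq_{i+1}$. By Observation~\ref{obs:structure}, each such blocked pair has a unique blocker on a three-point line.

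\textbf{Step 3 (recursion on colors).} Every point of $N(p)$ receives a color different from that of $p$. Thus $N(p)$, with the visibility inherited from $P$, is colored by at most four colors. Using Step 2, I would pass from $N(p)$ to a sizable subconfiguration, for instance a long convex or star-shaped chain of consecutive $q_i$'s. Its pairwise visibilities in $P$ should be governed by the same rules as a no-four-collinear point set in its own right. I would then repeat the argument with four, three and finally two colors:
\begin{itemize}[nosep]
\item Long chains whose consecutive points are visible, and which are only rarely blocked, cannot be $2$-colored once odd triangles appear. Such triangles come from three mutually visible consecutive neighbors.
\item Each recursion level costs a constant factor in size, roughly the factor $2$ from Step 1 together with losses from removing blocked consecutive pairs.
\end{itemize}
Combined with explicit small-case bounds for $3$- and $4$-colorable configurations, this should yield an absolute threshold, and the aim is to tune it down to $2311$.

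\textbf{Main obstacle.} The difficult step is Step 3. Visibility among points of $N(p)$ is visibility in $P$, not in $N(p)$, so the neighborhood is not literally a smaller instance of the same problem. Blockers of pairs inside $N(p)$ may lie outside $N(p)$, for example behind the chain. To handle this, I would first exploit the empty triangles $pq_iq_{i+1}$ to show that blockers of nearby pairs in the angular order are confined to a thin region near the chain. I would then use the fact that each blocker lies on a unique three-point line, as in the injectivity argument of Lemma~\ref{lem:interval}, to bound how many chain pairs can be blocked in total. Obtaining the specific constant $2311$, rather than merely some finite bound, would in addition require the careful case analysis of the original paper. I do not expect to reproduce that constant exactly from this sketch.
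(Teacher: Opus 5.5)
Steps 1 and 2 are fine, but Step 3 carries the entire theorem, and the recursion it proposes has nothing driving it. To descend from four colors to three, you need a point $q\in N(p)$ with many neighbors inside $\V_P[N(p)]$, and Step 1 does not transfer. Its ray argument uses that the first point of the configuration on a ray is visible, whereas the first point of $N(p)$ on a ray from $q$ may be blocked by a point of $P\setminus N(p)$ hidden from $p$. So you get no degree bound and no constant-factor loss per level.

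The only edges you actually certify inside $N(p)$ are the consecutive pairs $q_iq_{i+1}$. These are in fact never blocked: a point on the open segment $q_iq_{i+1}$ would itself be seen from $p$ in an intermediate direction. They form a cycle or path, which is $3$-colorable, while a contradiction already at the first level needs $\chi(\V_P[N(p)])\ge5$.

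A long convex chain is not available either. If $p,q_a,\dots,q_b$ span a strictly convex polygon, every chord lies in the union of the empty closed triangles $pq_iq_{i+1}$. Then $\{p,q_a,\dots,q_b\}$ is a clique, and $5$-colorability forces $b-a\le3$. Long chains are therefore far from convex, and their chords pass outside the fan, exactly where blockers from $P\setminus N(p)$ can sit. The small-case bounds for $3$- and $4$-colorable configurations you want to invoke do not exist for such induced subgraphs, which are not visibility graphs of point sets.

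The missing idea can be read off from the paper. The remark closing Appendix~\ref{sec:count} quotes the Hujter--Kisfaludi-Bak bound as $5\mathsf{H}(6)-5=5(\mathsf{H}(6)-1)$, and $n_{\mathsf{HKB}}=2310=5\cdot463-5$ corresponds to the bound $\mathsf{H}(6)\le463$. That form points to pigeonhole over the five color classes combined with the empty hexagon theorem. A color class is independent in $\V_P$, and with no four collinear points it has no three collinear points: if $x,y,z$ lie on a line in this order, a blocker of $xy$ would be a fourth point on it. Hence a class with at least $\mathsf{H}(6)$ points is in general position and contains a hexagon with no other point of the class in it.

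The blockers of that hexagon's fifteen vertex pairs lie in the closed hexagon and carry only the remaining four colors. The contradiction with $5$-colorability is then derived locally inside this hexagon. Nothing in your sketch plays the role of the empty-hexagon theorem, and without such an input it yields no finite bound at all, let alone $2311$.
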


Theorem \ref{thm:45} implies that a point set with no four collinear and a $5$-colorable
visibility graph has at most $n_{\mathsf{HKB}}=2310$ points (where the subscript stands for the authors). Now, define 
\begin{equation}\label{eq:nepsilon}
\varepsilon_0=\frac{1}{2(n_{\mathsf{HKB}}+1)}=\frac1{4622}.
\end{equation}
Now, for a graph $G = (V(G), E(G))$, define its vertex-deletion distance from
$5$-colorability by
\begin{align}\label{eq:distanceG} 
\tau_5(G)=\min\{|Z|: Z\subseteq V(G),\ \chi(G[V(G)\backslash Z])\le5\}.
\end{align}
A consequence of Theorem \ref{thm:45} is the following lemma from \cite{Bonnet}. We reproduce the proof for completeness.

\begin{lemma}[{\cite[Lemma 1]{Bonnet}}]
\label{lem:Z}
For any interval $J$ of $P$, with $|J|\ge 2 n_{\mathsf{HKB}}$, $\tau_5(\V_P[J])\ge\varepsilon_0 |J|$. 
\end{lemma}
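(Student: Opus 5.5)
The plan is a partition argument: cut $J$ into consecutive subintervals that are each too large to have a $5$-colorable visibility graph, and force the deletion set $Z$ to take at least one point from each. Let $J$ be an interval with $|J| = m \ge 2n_{\mathsf{HKB}}$, and let $Z \subseteq J$ be any set such that $\chi(\V_P[J\setminus Z]) \le 5$. We need to show $|Z| \ge \varepsilon_0 m = m/(2(n_{\mathsf{HKB}}+1))$.

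Partition $J$ into $t = \lfloor m/(n_{\mathsf{HKB}}+1)\rfloor$ pairwise disjoint consecutive subintervals $I_1,\dots,I_t$, each with exactly $n_{\mathsf{HKB}}+1 = 2311$ points, and discard any leftover points. Each $I_j$ is an interval of $P$. By Observation~\ref{obs:J}, $\V_{I_j} = \V_P[I_j]$. Since $I_j$ has $2311$ points and no four of them are collinear, Theorem~\ref{thm:45} gives $\chi(\V_P[I_j]) \ge 6$.

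Now suppose some $I_j$ were disjoint from $Z$. Then $\V_P[I_j]$ would be an induced subgraph of $\V_P[J\setminus Z]$, so it would be $5$-colorable, which is a contradiction. Hence $Z$ meets every $I_j$. Because the $I_j$ are disjoint, this gives $|Z| \ge t$. Taking the minimum over all admissible $Z$ yields $\tau_5(\V_P[J]) \ge t$.

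It remains to compare $t$ with $\varepsilon_0 m$. Write $q = m/(n_{\mathsf{HKB}}+1)$. The hypothesis $m \ge 2n_{\mathsf{HKB}}$ gives $q \ge 2n_{\mathsf{HKB}}/(n_{\mathsf{HKB}}+1) > 1$, so $t = \lfloor q\rfloor \ge 1$. Any real $q \ge 1$ satisfies $\lfloor q \rfloor \ge q/2$, since $\lfloor q\rfloor \ge 1$ and $\lfloor q\rfloor > q-1$. Therefore
\[
\tau_5(\V_P[J]) \ge t \ge \frac{m}{2(n_{\mathsf{HKB}}+1)} = \varepsilon_0 |J|.
\]

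The only delicate step is this floor estimate, which is exactly where the factor $2$ in $\varepsilon_0$ and the length threshold $2n_{\mathsf{HKB}}$ are used. The geometric input is already packaged in Observation~\ref{obs:J}, which makes the visibility graph of each subinterval intrinsic, and in Theorem~\ref{thm:45}.
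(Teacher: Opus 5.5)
Your proof is correct and is essentially the paper's pigeonhole argument in dual form. The paper splits $J\setminus Z$ into at most $|Z|+1$ consecutive runs, each of size at most $n_{\mathsf{HKB}}$ by Observation~\ref{obs:J} and Theorem~\ref{thm:45}, to get $|Z|\ge (|J|-n_{\mathsf{HKB}})/(n_{\mathsf{HKB}}+1)$; you instead fix $\lfloor |J|/(n_{\mathsf{HKB}}+1)\rfloor$ disjoint blocks of size $n_{\mathsf{HKB}}+1$ that $Z$ must each hit, which yields the same integer bound, and your floor estimate plays the role of the paper's final use of $|J|\ge 2n_{\mathsf{HKB}}$.
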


\begin{proof}
Let $Z\subseteq J$ be such that $\chi(\V_P[J \backslash Z])\le5$. The set $J \setminus Z$ is the union of $a \leq |Z|+1$ nonempty consecutive intervals $J_1,\ldots,J_{a}$ in the original projection order. For every $1 \leq b \leq a$, Observation~\ref{obs:J} gives $\V_{J_b}=\V_P[J_b]$. Since $\V_P[J_b]$ is an induced subgraph of $\V_P[J \backslash Z]$, it is $5$-colorable. Then, Theorem~\ref{thm:45} implies that $|J_b|\le n_{\mathsf{HKB}}$. Hence, 
$$|J|-|Z|=\sum_{b=1}^{a}|J_b|\le n_{\mathsf{HKB}} (|Z|+1).$$
Then using $|J| \geq 2 n_{\mathsf{HKB}}$ gives, $|Z| \ge\frac{|J|-n_{\mathsf{HKB}}}{n_{\mathsf{HKB}} +1}\ge\frac{|J|}{2(n_{\mathsf{HKB}} + 1)}=\varepsilon_0 |J|$. This proves Lemma \ref{lem:Z}.  
\end{proof}

The main technical input for the proof of Theorem \ref{thm:six} is a vertex-removal stability result for $H$-free graphs, where $H$ has chromatic number six and a color-critical edge. It relates their edge counts to their vertex-deletion distance from $5$-colorability. This is a qualitative consequence of \cite[Lemma 1.5]{Gerbner} (see also the proof of Lemma 2.3 in \cite{RobertsScott}). We include a proof of this result in Appendix \ref{sec:coloredgepf} for the sake of completeness.

\begin{proposition}\label{ppn:coloredge}
Let $H$ be a fixed graph with $\chi(H)=6$ and a color-critical
edge. For every $\delta>0$, there exist a constant $\beta=\beta(H,\delta)>0$ and a positive integer $M_H=M_H(\delta)$ such that every $H$-free graph $G = (V(G), E(G))$, with $|V(G)| \ge M_H$ and $\tau_5(G)\ge\delta |V(G)|$, satisfies
\begin{equation}\label{eq:VG}
|E(G)|\le\left(\frac25-\beta\right) |V(G)|^2.
\end{equation}
\end{proposition}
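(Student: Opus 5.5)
We argue by contradiction, combining the Erd\H{o}s--Simonovits stability theorem with the color-critical edge to handle the near-extremal regime. Suppose that for some $\delta>0$ no such $\beta$ and $M_H$ exist. Then there is a sequence of $H$-free graphs $G_m$ with $N_m=|V(G_m)|\to\infty$, $\tau_5(G_m)\ge\delta N_m$, and $|E(G_m)|\ge(\frac25-\frac1m)N_m^2$. Since $\chi(H)=6$, the Erd\H{o}s--Stone--Simonovits theorem already caps the edge count at $(\frac25+o(1))N^2$, so the $G_m$ are asymptotically extremal. The Erd\H{o}s--Simonovits stability theorem then shows that, for every $\eta>0$ and all large $m$, $G_m$ can be turned into a complete $5$-partite graph by changing at most $\eta N_m^2$ edges. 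Let $V_1,\dots,V_5$ be a max-cut $5$-partition of $V(G_m)$. The standard argument gives $|V_i|=(\frac15+o(1))N_m$, and at most $\eta N_m^2$ edges lie inside parts. The goal is to show that such a graph is close to $5$-colorable in the vertex-deletion sense, namely $\tau_5(G_m)=o(N_m)$, contradicting $\tau_5(G_m)\ge\delta N_m$.

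\textbf{Cleaning step.} First, we remove a small set $B_1$ of \emph{atypical} vertices. These are vertices that have at least $\gamma N_m$ neighbors in their own part, or fewer than $(\frac15-\gamma)N_m$ neighbors in some other part. By double counting, with $\eta\ll\gamma$ we get $|B_1|\le c(\eta/\gamma)N_m=o(N_m)$. Each remaining vertex $v\in V_i\setminus B_1$ is adjacent to almost all of every other part, up to $O(\gamma N_m)$ non-neighbors per part.

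\textbf{Key step: using the color-critical edge.} Write $h=|V(H)|$. Since $H$ has a color-critical edge, $H$ embeds in $K_5(h)$ plus one edge $xy$ added inside a single part. We claim that, after removing $B_1$, the edges remaining inside the parts form a graph with no large matching. Suppose instead that there is an edge $uv$ inside $V_1\setminus B_1$. Both $u$ and $v$ are adjacent to all but $O(\gamma N_m)$ vertices of each of $V_2,\dots,V_5$. So their common neighborhood meets each $V_j$, $j\ge2$, in a set $W_j$ of size at least $(\frac15-O(\gamma))N_m$. Among typical vertices, the bipartite graphs between the $W_j$ have density $1-O(\gamma)$. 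The K\H{o}v\'ari--S\'os--Tur\'an or counting lemma, or simply a greedy embedding using the minimum-degree property, then yields a $K_4(h)$ inside $W_2\cup\dots\cup W_5$. Together with $u,v$ and $h-2$ further common neighbors taken from $V_1$, this gives a copy of $H$. That contradicts $H$-freeness.

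The sentence about $V_1$ needs care, because the other vertices of the part containing $x,y$ need not be adjacent to $u$ or $v$. However, the part of $H$ containing $x$ and $y$ is independent apart from the edge $xy$. So we can place $x=u$, $y=v$, and put the other vertices of that part anywhere in $V_1\setminus B_1$, provided they are joined to the embedded $K_4(h)$. We achieve this by embedding greedily: first choose the $h-2$ extra vertices of $V_1$, then choose the $K_4(h)$ inside the common neighborhood of all $h$ vertices of the first part. This works because each typical vertex misses only $O(\gamma N_m)$ vertices of each other part. Hence every $V_i\setminus B_1$ is independent, and $G_m[V\setminus B_1]$ is $5$-colorable. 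This gives $\tau_5(G_m)\le|B_1|=o(N_m)$, the desired contradiction.

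The main obstacle is choosing the parameter hierarchy $1/m\ll\eta\ll\gamma\ll\delta,1/h$ so that two requirements hold simultaneously. The greedy embedding must find $h$-sized sets in common neighborhoods of up to $5h$ vertices, each with $O(\gamma N_m)$ defects, which needs $5h\cdot O(\gamma)<\frac1{10}$. At the same time, the bound on $|B_1|$ must stay below $\delta N_m$. With these choices, $\beta$ and $M_H$ are obtained non-constructively from the contradiction, which suffices since the proposition is only qualitative.
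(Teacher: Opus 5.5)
Your proof is correct and follows essentially the same route as the paper's. Both argue by contradiction along a sequence of near-extremal graphs, take a stability partition into five near-balanced parts, delete the few vertices with many cross non-neighbours, and use the colour-critical edge in a greedy embedding. That embedding places the rest of the first colour class arbitrarily in $V_1\setminus B_1$ and the other four classes in common neighbourhoods, so each cleaned part is independent and $\tau_5(G_m)$ is too small.

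The differences are cosmetic. You work with fixed parameters $\eta\ll\gamma$, so your ``$o(N_m)$'' bounds are really $O(\eta/\gamma)N_m<\delta N_m$, which is all you need; the paper instead lets $\gamma_t\to0$. Your extra deletion of vertices with large degree inside their own part is harmless but unnecessary.
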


\begin{remark}
{\em Recall that the number of edges in a balanced complete 5-partite graph with $N$ vertices in total is at most $\frac{2}{5} N^2$, with equality when $5$ divides $N$. Proposition~\ref{ppn:coloredge} shows that a sufficiently large $H$-free graph on $N$ vertices that requires the deletion of at least $\delta N$ vertices to become $5$-colorable has at least $\beta N^2$ fewer edges than $\frac{2}{5}N^2$. Thus, being far from $5$-colorable forces a fixed quadratic edge deficit below the leading 5-partite Tur\'an bound.  }  
\end{remark}

Using Proposition \ref{ppn:coloredge} we can show in the next lemma that forbidding a fixed graph $H$ as above, in the visibility graph of a point set with no four collinear points forces every sufficiently long sliding interval $J$ to have a fixed quadratic edge deficit below $\frac{2}{5}|J|^2$.

\begin{lemma}
\label{lem:intervaledge}
Let $H$ be a fixed graph with $\chi(H)=6$ and a color-critical edge. There exist constants $0<\beta\le\frac{1}{10}$ and an integer $K_0$ such that, whenever $P$ has no four collinear points and $\V_P$ is $H$-free, then every interval $J$ of $P$, with $|J| \ge K_0$, satisfies 
\begin{equation}\label{eq:interval-deficit}
|E(\V_P[J])|\le\left(\frac{2}{5}-\beta\right) |J|^2  .  
\end{equation}
\end{lemma}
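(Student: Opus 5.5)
The plan is to combine Lemma~\ref{lem:Z} with Proposition~\ref{ppn:coloredge}, applied with $\delta=\varepsilon_0$ from \eqref{eq:nepsilon}. Proposition~\ref{ppn:coloredge} with this choice of $\delta$ supplies a constant $\beta_1=\beta(H,\varepsilon_0)>0$ and an integer $M_H=M_H(\varepsilon_0)$, both depending only on $H$. We set
$$\beta=\min\Bigl\{\beta_1,\tfrac{1}{10}\Bigr\},\qquad K_0=\max\{M_H,\,2n_{\mathsf{HKB}}\}.$$
Decreasing $\beta$ only weakens the conclusion \eqref{eq:VG}, so the bound with $\beta$ in place of $\beta_1$ still holds, and now $0<\beta\le\frac1{10}$ as required. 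Both constants are independent of $P$.

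Now let $P$ have no four collinear points, suppose $\V_P$ is $H$-free, and let $J$ be an interval with $|J|\ge K_0$. Put $G=\V_P[J]$.
\begin{enumerate}[label=(\roman*)]
\item $G$ is an induced subgraph of the $H$-free graph $\V_P$, so $G$ is $H$-free.
\item $|V(G)|=|J|\ge M_H$.
\item Since $|J|\ge 2n_{\mathsf{HKB}}$, Lemma~\ref{lem:Z} gives $\tau_5(G)\ge\varepsilon_0|J|=\delta|V(G)|$.
\end{enumerate}
Hence all hypotheses of Proposition~\ref{ppn:coloredge} hold, and it yields
$$|E(\V_P[J])|\le\left(\frac25-\beta_1\right)|J|^2\le\left(\frac25-\beta\right)|J|^2,$$
which is \eqref{eq:interval-deficit}.

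Given the earlier results, no step here is difficult. The only point needing care is that Lemma~\ref{lem:Z} is applied to $\V_P[J]$ rather than $\V_J$. These agree by Observation~\ref{obs:J}, and that lemma is already stated in terms of $\V_P[J]$. The real difficulty lies inside Proposition~\ref{ppn:coloredge}, which is taken as given here and proved in the appendix.
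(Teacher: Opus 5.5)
Your proposal is correct and follows the paper's proof essentially step for step. Like the paper, you apply Proposition~\ref{ppn:coloredge} with $\delta=\varepsilon_0$, shrink $\beta$ so that $\beta\le\frac{1}{10}$, set $K_0=\max\{2n_{\mathsf{HKB}},M_H\}$, and use Lemma~\ref{lem:Z} to verify the hypothesis $\tau_5(\V_P[J])\ge\varepsilon_0|J|$ for every interval $J$ with $|J|\ge K_0$.
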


\begin{proof}
Apply Proposition \ref{ppn:coloredge} with $\delta=\varepsilon_0$, to obtain $\beta>0$ and $M_H$. Note that decreasing $\beta$, if necessary, still preserves the result in Proposition~\ref{ppn:coloredge}. Hence, we can assume that $0<\beta\le\frac{1}{10}$. Define $$K_0=\max \{2 n_{\mathrm{HKB}}, M_H \}.$$ For an interval $J$ of size $|J| \ge K_0$, the induced graph $\V_P[J]$ is
$H$-free, and Lemma~\ref{lem:Z} gives
$\tau_5(\V_P[J])\ge\varepsilon_0 |J|$. Then \eqref{eq:VG} implies, 
$$|E(\V_P[J])|\le \left(\frac{2}{5} - \beta\right) |J|^2,$$
as required.
\end{proof}

Fix a graph $H$ with $\chi(H)=6$ and a color-critical edge.
Choose $\beta$ and $K_0$ as in Lemma~\ref{lem:intervaledge}, and set
$$\rho:=\frac25-\beta \quad \text{ and } \quad m:=\max\left\{K_0,\left\lceil\frac{2}{5\beta}\right\rceil\right\}.$$
Define
\begin{align*}
N_H:=1+\left\lceil \left(2+\frac{2\rho}{5\beta}\right) m \right\rceil.
\end{align*} 
 Suppose, for a contradiction, that $P$ is a point set with $n:=|P|\ge N_H$, no four points of $P$ are collinear, and $\V_P$ is $H$-free. By Lemma~\ref{lem:intervaledge}, every interval $J$ of $P$ with $|J|\ge K_0$ satisfies $|E(\V_P[J])|\le\rho |J|^2$. Since $0<\beta\le \frac{1}{10}$, we have $0<\rho< \frac{2}{5}$. Further, $m \ge\frac{2}{5\beta}> \frac{1}{2-5\rho}$. Thus, Lemma~\ref{lem:density} gives, 
$$n\le 2 m -1+ \frac{\rho m ^2}{(2-5\rho) m -1} \le 2 m -1+ \frac{\rho m^2}{ \frac{5}{2} \beta m } <N_H, $$ 
contradicting $n\ge N_H$. This proves the theorem.  \hfill  $\Box$

%
%

\begin{remark} 
{\em The present argument appears to reach a natural barrier at $K_7$. The sliding-interval comparison becomes effective when every sufficiently long interval $J$ contains at most $\rho|J|^2$ visible pairs for some fixed $\rho<\frac{2}{5}$, with
$\frac{2}{5}$ being the exact threshold for this counting argument (recall Remark~\ref{remark:interval}). Interestingly, the Erd\H{o}s--Stone--Simonovits theorem shows that the maximum number of edges in an $N$-vertex graph avoiding a fixed
6-chromatic graph is $(\frac{2}{5}+o(1))N^2$. For 6-chromatic graphs with a color-critical edge, stability, combined with the interval-coloring argument, yields a fixed positive deficit below this threshold for the relevant visibility graphs (as in Lemma~\ref{lem:intervaledge}). In contrast, the graph-theoretic condition of being $K_7$-free alone gives only the Tur\'an bound: the maximum number of edges in a $K_7$-free graph on $N$ vertices is $\frac{5}{12}N^2+O(1)$, which lies above the $\frac{2}{5}$ threshold, and the stability input used here is not enough to bridge the gap in the leading coefficients. Thus, resolving the $(7,4)$-case of the big-line-big-clique conjecture would likely require additional geometric inputs or a stronger comparison than the present
sliding-interval argument.  }  
\end{remark}

\small  

\subsection*{Acknowledgments}  
BBB is grateful to Arijit Bishnu for introducing him to the big-line-big-clique conjecture nearly fifteen years ago, and for many interesting discussions over the years. BBB was supported in part by NSF CAREER Grant DMS-2046393 and the National University of Singapore's PESS fund.

\bibliographystyle{abbrvnat} 
\bibliography{bibliography}

\normalsize

\appendix

\section{Improved Bound for Six Visible Points}\label{sec:count}

In this section, we show how the sliding-interval counting argument, combined with the following explicit $K_6$-stability result from \cite{Bonnet}, improves the bound on the number of points required to guarantee a 6-clique in a visibility graph with no four collinear points.


\begin{lemma}[{\cite[Lemma~2]{Bonnet}}]\label{lem:clique}
Let $0<\varepsilon< \frac{1}{3750}$. If a $K_6$-free graph $G = (V(G), E(G))$, with $|V(G)|\ge20 $ vertices satisfies
$$
|E(G)|> \left( \frac{2}{5}-\varepsilon \right) |V(G)|^2,
$$
then there is a set $Z\subseteq V(G)$ with $|Z|<3500\varepsilon |V(G)|$  and $
\chi(G[V(G)\backslash Z])\le 5$. 
\end{lemma}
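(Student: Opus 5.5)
The plan is the standard Füredi-type stability argument, made quantitative, followed by a cleanup step that converts edge deletion into vertex deletion. Let $n=|V(G)|$ and write $e(G)>(\frac25-\varepsilon)n^2$.

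\emph{Step 1: near-balanced partition.} Apply Füredi's stability theorem: a $K_{r+1}$-free graph with at least $t_r(n)-t$ edges becomes $r$-partite after deleting at most $t$ edges. Here $r=5$ and $t_5(n)\le\frac25n^2$. This gives a partition $V(G)=V_1\cup\cdots\cup V_5$ with at most $\varepsilon n^2$ edges inside the parts. The number of cross edges is at most
$$\sum_{i<j}|V_i||V_j|=\tfrac12\Bigl(n^2-\sum_i|V_i|^2\Bigr)\le\tfrac25n^2-\tfrac12\sum_i\bigl(|V_i|-\tfrac n5\bigr)^2 .$$
Comparing this with $e(G)-\varepsilon n^2$ gives $\sum_i(|V_i|-\frac n5)^2\le4\varepsilon n^2$. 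Hence every part has size $\frac n5\pm2\sqrt\varepsilon\,n$, and the cross bipartite graphs miss only $O(\varepsilon n^2)$ pairs in total.

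\emph{Step 2: choose the deleted set $Z$.} Fix a threshold $\alpha$ of order $1/10$ to $1/35$, to be tuned at the end. Put into $Z$ every vertex $v$ satisfying either of two conditions:
\begin{itemize}
\item[(a)] $v$ has at least $\alpha n$ neighbours inside its own part;
\item[(b)] $v$ has at least $\alpha n$ non-neighbours in $V(G)\setminus V_{i(v)}$, where $i(v)$ is the index of the part containing $v$.
\end{itemize}
Double counting bounds the size of each class. The internal edges number at most $\varepsilon n^2$, so class (a) has at most $2\varepsilon n/\alpha$ vertices. The missing cross pairs number $O(\varepsilon n^2)$, so class (b) has $O(\varepsilon n/\alpha)$ vertices. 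With $\alpha$ a fixed absolute constant, this gives $|Z|\le C\varepsilon n$, and the target is $C<3500$.

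\emph{Step 3: $G-Z$ is $5$-colourable via the parts.} Suppose an edge $uw$ survives inside some $V_i\setminus Z$. I would build a $K_6$ containing it by choosing, greedily, vertices $x_j\in V_j\setminus Z$ for each $j\ne i$, each adjacent to $u$, $w$, and all previously chosen $x$'s. Every vertex outside $Z$ misses at most $\alpha n$ vertices outside its own part. So when choosing $x_j$, at most $5\alpha n$ vertices of $V_j$ are excluded by non-adjacency, and at most $|Z|$ more are excluded because they lie in $Z$. Meanwhile $|V_j|\ge\frac n5-2\sqrt\varepsilon\,n$. Thus a valid $x_j$ exists provided
$$5\alpha+C\varepsilon+2\sqrt\varepsilon<\tfrac15 .$$
This holds for $\alpha$ somewhat below $1/25$ and $\varepsilon<1/3750$. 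The resulting $K_6$ contradicts $K_6$-freeness, so each $V_i\setminus Z$ is independent and $\chi(G[V(G)\setminus Z])\le5$. The hypothesis $n\ge20$ only serves to absorb floor and rounding effects in $t_5(n)$ and the part sizes.

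\emph{Main obstacle.} The difficulty is purely quantitative: there is a tension in choosing $\alpha$. A larger $\alpha$ shrinks $Z$, at $|Z|\approx c\varepsilon n/\alpha$, but the greedy clique extension needs $5\alpha$ well below $\frac15$. The constants $3500$ and $1/3750$ presumably come from balancing exactly this.

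To keep $C$ small, I would count class (b) more carefully, via the degree deficit. Since $\sum_v\bigl(\frac45n-\deg v\bigr)\lesssim2\varepsilon n^2$, and each vertex of class (b) (if not in class (a)) has degree at most about $\frac45n-\alpha n+O(\sqrt\varepsilon\,n)$, class (b) again has $O(\varepsilon n/\alpha)$ vertices with a modest constant.

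If Füredi's theorem is not to be cited as a black box, an alternative for Step 1 is to reprove it using Zykov symmetrization and a maximum-degree-vertex argument. In that version the same constant bookkeeping arises.
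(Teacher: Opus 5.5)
\textbf{The approach is sound, but the numerical step that is supposed to deliver the stated constants fails as written; the repair is to drop class (a).}

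The paper does not prove this lemma; it quotes it from Bonnet. So the only question is whether your argument establishes it with the stated constants. Your architecture is the right one: Füredi's edge-stability theorem, then deleting vertices with many cross non-neighbours, then greedily extending a surviving internal edge to a $K_6$. Your Step~1 bookkeeping is also correct. Fewer than $\varepsilon n^2$ internal edges give $\sum_i(|V_i|-\frac n5)^2<4\varepsilon n^2$ and fewer than $2\varepsilon n^2$ missing cross pairs.

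\textbf{Where Step~3 fails.} Your double counting gives $|Z|<2\varepsilon n/\alpha+4\varepsilon n/\alpha=6\varepsilon n/\alpha$. So your sufficient condition is $5\alpha+6\varepsilon/\alpha+2\sqrt{\varepsilon}<\frac15$.
\begin{itemize}
\item By AM--GM, $5\alpha+6\varepsilon/\alpha\ge 2\sqrt{30\varepsilon}$, which is about $0.179$ when $\varepsilon$ is close to $\frac1{3750}$. Meanwhile $2\sqrt{\varepsilon}$ is about $0.033$.
\item Hence for such $\varepsilon$ the condition fails for \emph{every} $\alpha$. It fails whenever $\varepsilon\ge 1/(100(1+\sqrt{30})^2)\approx 1/4195$.
\item In particular, ``$\alpha$ somewhat below $1/25$'' cannot work, since then $5\alpha$ alone is almost $\frac15$.
\end{itemize}

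\textbf{The degree-deficit refinement does not rescue this.} A vertex in class (b) but not in class (a) has fewer than $\alpha n$ own-part neighbours, and these can replace its $\alpha n$ missing cross neighbours. Its degree is then only bounded by $n-|V_{i(v)}|$, which can be as large as $\frac45 n+2\sqrt{\varepsilon}\,n$, so no deficit of order $\alpha n$ is forced. Moreover, the sum $\sum_v(\frac45 n-\deg v)<2\varepsilon n^2$ has negative summands. By itself it therefore does not bound the number of vertices with large deficit.

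\textbf{The repair.} Class (a) plays no role in Step~3. The extension only uses that the already chosen vertices have fewer than $\alpha n$ non-neighbours outside their own parts. So take $Z$ to be class (b) alone.
\begin{itemize}
\item Then $|Z|<4\varepsilon n/\alpha$.
\item A valid $x_j\in V_j\setminus Z$ exists whenever $5\alpha+4\varepsilon/\alpha+2\sqrt{\varepsilon}<\frac15$.
\item With $\alpha=\frac1{50}$ and $\varepsilon<\frac1{3750}$, the left side is less than $0.1+0.054+0.033<\frac15$.
\item This gives $|Z|<200\varepsilon n<3500\varepsilon n$, and the hypothesis $n\ge 20$ is not needed.
\end{itemize}
Alternatively, keep class (a) in $Z$ but forbid only class-(b) vertices as candidates $x_j$; this gives $|Z|<300\varepsilon n$. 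With either change your argument is a complete proof of the lemma.
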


Combining this result with Lemma~\ref{lem:density} yields the following numerical improvement on Bonnet's bound~\cite{Bonnet}.

\begin{corollary}\label{cor:six}
Every finite planar point set with at least $2\cdot10^{13}$ points
contains four collinear points or six pairwise visible points.
\end{corollary}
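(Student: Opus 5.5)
We argue by contradiction, as in the proof of Theorem~\ref{thm:six}. Suppose $P$ has no four collinear points, $\V_P$ is $K_6$-free, and $n=|P|\ge 2\cdot 10^{13}$. The plan is to get an explicit deficit constant from Lemma~\ref{lem:clique}, in place of the qualitative Proposition~\ref{ppn:coloredge}, and then optimize the choice of $K$ in Lemma~\ref{lem:density}.

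\emph{Interval deficit.} Set $\varepsilon:=\varepsilon_0/3500=\frac{1}{3500\cdot 4622}=\frac{1}{16177000}$, which is well below $\frac1{3750}$.
\begin{itemize}[nosep]
\item Take any interval $J$ with $|J|\ge 2n_{\mathsf{HKB}}=4620$; in particular $|J|\ge 20$. By Observation~\ref{obs:J}, $\V_P[J]$ is $K_6$-free.
\item Suppose $|E(\V_P[J])|>(\frac25-\varepsilon)|J|^2$. Lemma~\ref{lem:clique} then gives $Z\subseteq J$ with $|Z|<3500\varepsilon|J|=\varepsilon_0|J|$ and $\V_P[J\setminus Z]$ $5$-colorable.
\item This contradicts Lemma~\ref{lem:Z}.
\end{itemize}
Hence every interval of length at least $K_0:=4620$ satisfies $|E(\V_P[J])|\le\rho|J|^2$ with $\rho:=\frac25-\varepsilon$. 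This step is routine.

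\emph{Optimizing $K$.} Here $2-5\rho=5\varepsilon=:c$, and Lemma~\ref{lem:density} gives, for any integer $K\ge K_0$ with $K>1/c$,
$$n\le 2K-1+\frac{\rho K^2}{cK-1}.$$
The dominant term $\rho K^2/(cK-1)$ is minimized near $K=2/c$, where it equals $4\rho/c^2$. So we choose $K:=\lceil 2/c\rceil=\lceil 2/(5\varepsilon)\rceil=6470800$. This is far larger than $K_0$, and $cK-1\ge1$. Then
$$n\le \frac4c+1+\frac{4\rho}{c^2}(1+o(1))\approx \frac{4\cdot \frac25}{25\varepsilon^2}=0.064\cdot(16177000)^2\approx 1.675\cdot10^{13}.$$
The lower-order terms add only about $10^{7}$, so $n<2\cdot10^{13}$, which is the desired contradiction.

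The main obstacle is purely arithmetic. The rounding of $K$ must be checked so that the exact expression stays below $2\cdot 10^{13}$. Concretely, with $cK-1\ge 1$ and $\rho<\frac25$, we need to bound $\rho K^2/(cK-1)$ rigorously, for instance by writing $K=2/c+\theta$ with $0\le\theta<1$. The margin of about $16\%$ leaves ample room for this.
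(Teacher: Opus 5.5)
Your proposal is correct and follows the paper's proof essentially step for step. It uses the same deficit $\varepsilon=\varepsilon_0/3500=1/16\,177\,000$, obtained by combining Lemma~\ref{lem:clique} with Lemma~\ref{lem:Z}, and the same choice $K=\lceil 2/(5\varepsilon)\rceil=6\,470\,800$ in Lemma~\ref{lem:density}. The rounding worry disappears because $2/(5\varepsilon)$ is exactly an integer, so $cK-1=1$ and the bound is exactly $2K-1+\rho K^2=16\,748\,511\,409\,279<2\cdot10^{13}$, as the paper computes.
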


\begin{proof}
Suppose, for a contradiction, that $P\subset\R^2$ is a finite
point set with $n:=|P|\ge2\cdot10^{13}$, no four points of $P$
are collinear, and $\V_P$ is $K_6$-free. We order $P$ by a generic
linear projection as in Section \ref{sec:interval}, and set
$$\varepsilon_0:=\frac1{4622}, \quad \beta:=\frac{\varepsilon_0}{3500}
=\frac1{16\,177\,000}, \quad K_0:=4620, \quad \text{ and } \quad  \rho:=\frac25-\beta.$$
We first show that every interval $J$ of $P$, with $|J|\ge K_0$, 
satisfies $|E(\V_P[J])|\le\rho|J|^2$. To see this, first note that by Lemma~\ref{lem:Z}, 
$\tau_5(\V_P[J])\ge\varepsilon_0|J|$. Hence, if $|E(\V_P[J])|> (\frac25-\beta)|J|^2$,
then Lemma~\ref{lem:clique}, applied to the $K_6$-free graph
$\V_P[J]$ with $\varepsilon=\beta$, would give a set
$Z\subseteq J$ satisfying
$$
|Z|<3500\beta|J|=\varepsilon_0|J|
\qquad\text{and}\qquad
\chi(\V_P[J\setminus Z])\le5,
$$
which is a contradiction to $\tau_5(\V_P[J])\ge\varepsilon_0|J|$. Note that the hypotheses of Lemma~\ref{lem:clique} hold because $|J|\ge4620\ge20$ and $0<\beta< \frac{1}{3750}$.
 
Now, choose
$$m:=\max\left\{ K_0,\left\lceil\frac{2}{5\beta}\right\rceil
\right\} =6\,470\,800.$$
Note that $m =\frac{2}{5\beta}> \frac{1}{2-5\rho}$. Hence, by Lemma~\ref{lem:density}, 
\begin{align}\label{eq:clique}
n\le2 m-1+ \frac{\rho m^2}{(2-5\rho)m-1} =\left(2+\frac{2\rho}{5\beta}\right)m-1 =16\,748\,511\,409\,279 <2\cdot10^{13}, 
\end{align}
contradicting $n\ge2\cdot10^{13}$. 
\end{proof}

%
%

\begin{remark}  
{\em Corollary~\ref{cor:six} shows that $2\cdot10^{13}$ points suffice to guarantee four collinear points or six pairwise visible points, improving on the bound $10^{11055931}$ in \cite[Theorem~1]{Bonnet}. This improvement retains Bonnet's original stability and coloring inputs and illustrates the substantial quantitative gain obtained from the sliding-interval counting argument. A further improvement is possible by combining the bound in \cite[Theorem~15]{HujterKisfaludiBak} with the exact value of the empty-hexagon number. Let $\mathsf{H}(6)$ denote the smallest integer such that every finite planar point set with at least $\mathsf{H}(6)$ points in general position (that is, with no three collinear points) contains an empty convex hexagon. \citet[Theorem~15]{HujterKisfaludiBak} show that a planar point set with no four collinear points and a $5$-colorable visibility graph has at most $5\mathsf{H}(6)-5$ points. Since \citet{HeuleScheucher} proved that $\mathsf{H}(6)=30$, we may replace $n_{\mathrm{HKB}}=2310$ by $n_{\mathrm{HKB}}=145$. Keeping the stability constant $3500$ unchanged and repeating the proof of Corollary~\ref{cor:six} with the updated parameters improves the bound in \eqref{eq:clique} to 
$$n\le66\,847\,630\,079<7\cdot10^{10}.$$ This shows, every finite planar point set with at least $7\cdot10^{10}$ points contains four collinear points or six pairwise visible points. During the final stages of preparing this manuscript, we became aware of a new arXiv preprint \cite{improved} which establishes that $880$ points suffice for the same conclusion, a substantial improvement over the numerical bounds obtained here. Determining the exact minimum number of points required remains open.  }  
\end{remark}

\section{ Proof of Proposition \ref{ppn:coloredge} }
\label{sec:coloredgepf}

We use the following partition formulation of the
Erd\H{o}s-Simonovits stability theorem
\cite{ErdosStability,Simonovits} (see also 
\cite[Theorem 1.1]{RobertsScott}).

\begin{theorem}  
\label{thm:stability}
Let $F$ be a fixed graph with $\chi(F)=6$, and let $\{G_t\}_{t \geq 1}$, with $G_t = (V(G_t), E(G_t))$, be a sequence of $F$-free graphs with $|V(G_t)| \to\infty$ and
$$|E(G_t)|\ge\left(\frac25-o(1)\right)|V(G_t)|^2.$$
Then there are partitions $V(G_t)=V_{t}^{(1)} \cup\cdots\cup V_{t}^{(5)}$, such that
$$|V_{t}^{(i)}|=\frac{|V(G_t)|}{5}+o(|V(G_t)|) , $$
for every $1 \leq i \leq 5$, and the total number of nonadjacent pairs with endpoints
in different parts is $o(|V(G_t)|^2)$. Further, the total number of edges with both endpoints in the same part is $o(|V(G_t)|^2)$.  
\end{theorem}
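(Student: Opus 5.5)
The plan is to reduce the statement to a clique-stability problem on a bounded-size reduced graph via Szemerédi's regularity lemma, and then convert the resulting near-$5$-partite structure into the asserted balanced partition by elementary counting. Write $N=N_t=|V(G_t)|$. Since $\chi(F)=6$, the Erdős--Stone argument (as in the remark after Theorem~\ref{thm:partite}) gives $F\subseteq K_6(a)$ with $a=|V(F)|$, so every $G_t$ is $K_6(a)$-free.

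\emph{Step 1 (regularity).} Fix a small $\eta>0$ and apply the regularity lemma to $G_t$ with parameter $\eta$. This gives an equipartition into a bounded number $k=k(\eta)$ of clusters. Let the reduced graph $R_t$ on $[k]$ join two clusters whenever the pair is $\eta$-regular with density at least $\sqrt{\eta}$. Edges of $G_t$ not accounted for by $R_t$ are those inside clusters, in irregular pairs, or in sparse pairs. There are at most $O(\sqrt{\eta})N^2$ of them. Hence
$$|E(R_t)|\ge\Bigl(\frac25-O(\sqrt\eta)-o(1)\Bigr)k^2.$$
If $R_t$ contained a $K_6$, the embedding (counting) lemma would yield a $K_6(a)$ in $G_t$ once $N$ is large, which is impossible. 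So $R_t$ is $K_6$-free and nearly extremal.

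\emph{Step 2 (clique stability on $R_t$).} I would use Füredi's short stability theorem: a $K_{6}$-free graph on $k$ vertices with at least $t_5(k)-m$ edges can be made $5$-partite by deleting at most $m$ edges. Here $t_5(k)$ denotes the number of edges of the complete balanced $5$-partite (Turán) graph on $k$ vertices. Its proof goes by induction using a maximum-degree vertex and Zykov symmetrization. Applying it to $R_t$ with $m=O(\sqrt\eta)k^2$ gives a partition of the clusters into five classes. Let $V_t^{(i)}$ be the union of the clusters in class $i$. Edges of $G_t$ inside some $V_t^{(i)}$ are either within clusters, in irregular or sparse pairs, or in the at most $m$ deleted reduced edges, each of which contributes at most $(N/k)^2$ edges. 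In total there are $O(\sqrt\eta)N^2$ such edges. Letting $\eta\to0$ slowly as $t\to\infty$ gives the last claim of the theorem. I expect this step to be the main obstacle: quoting Füredi's lemma is cleanest, and otherwise the Zykov-symmetrization argument must be reproduced.

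\emph{Step 3 (balance and missing cross pairs).} Let $e_{\rm in}=o(N^2)$ be the number of edges inside parts. The number of cross pairs equals
$$\sum_{i<j}|V_t^{(i)}||V_t^{(j)}|=\frac{N^2-\sum_i|V_t^{(i)}|^2}{2}.$$
This quantity is at least $|E(G_t)|-e_{\rm in}\ge(\frac25-o(1))N^2$. Since
$$\sum_i|V_t^{(i)}|^2=\frac{N^2}{5}+\sum_i\Bigl(|V_t^{(i)}|-\frac N5\Bigr)^2,$$
we get $\sum_i(|V_t^{(i)}|-N/5)^2=o(N^2)$, which gives the balance condition. Finally, the number of cross pairs is at most $\frac25N^2$, while the number of cross edges is at least $(\frac25-o(1))N^2$. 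Hence the number of nonadjacent cross pairs is $o(N^2)$, completing the proof.
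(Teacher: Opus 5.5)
\paragraph{Comparison with the paper.} The paper does not prove this statement. It quotes it as the classical Erd\H{o}s--Simonovits stability theorem \cite{ErdosStability,Simonovits}, in the partition form of \cite[Theorem~1.1]{RobertsScott}, and uses it only as a black box in the contradiction argument for Proposition~\ref{ppn:coloredge}. Your proposal is therefore a genuinely different and essentially self-contained route, and it is correct in outline once one parameter choice in Step~1 is fixed (below).

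\begin{itemize}
\item Regularity transfers the problem for a general $F$ (via $F\subseteq K_6(a)$) to a $K_6$-free, nearly extremal reduced graph.
\item F\"uredi's edge-deletion form of clique stability handles that reduced graph.
\item Step~3 recovers both the balance of the parts and the sparsity of missing cross pairs purely from the edge count. This step is correct and tidy, and it makes it harmless that some of F\"uredi's classes might be empty.
\end{itemize}

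Step~2 is not really an obstacle. F\"uredi's lemma can be cited as stated, and its proof is not symmetrization but a few lines of Erd\H{o}s-type degree majorization: repeatedly choose a maximum-degree vertex inside the current common neighbourhood. Regularity plus this lemma is in fact how F\"uredi himself rederives Simonovits' stability theorem.

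Compared with the citation, your route is longer and its hidden constants are tower-type. That does not matter here, because only the qualitative $o(\cdot)$ statement is used: Proposition~\ref{ppn:coloredge} is proved along a sequence $G_t$.

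\paragraph{The Step~1 parameters must be changed.} With regularity parameter $\eta$ and density threshold $\sqrt{\eta}$, the embedding lemma does not produce $K_6(a)$.
\begin{itemize}
\item The key lemma requires $\epsilon\le (d-\epsilon)^{\Delta}/(\Delta+2)$, where $\Delta=5a$ is the maximum degree of $K_6(a)$. Here this reads $\eta\le(\sqrt{\eta}-\eta)^{5a}/(5a+2)$, which is false for every small $\eta$.
\item Even the greedy search for a single $K_6$ needs $(d-\epsilon)^4\ge\epsilon$, i.e.\ roughly $\eta^2\ge\eta$, which also fails.
\end{itemize}

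The standard fix is to choose the density threshold $d$ first, and then a regularity parameter small in terms of $d$ and $a$. For instance, $\epsilon\le (d/2)^{5a}/(5a+2)$ works, with at least $1/\epsilon$ clusters. Then:
\begin{itemize}
\item the discarded edges number $O(d)N^2$, so every $O(\sqrt{\eta})$ in your argument becomes $O(d)$;
\item the reduced graph is genuinely $K_6$-free once $N$ is large compared with the number of clusters;
\item letting $d\to 0$ slowly as $t\to\infty$ yields all three conclusions exactly as you describe.
\end{itemize}
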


\begin{proof}[Proof of Proposition~\ref{ppn:coloredge}]
Fix a graph $H$ with $\chi(H)=6$ and a color-critical edge. Suppose, for a contradiction, that Proposition~\ref{ppn:coloredge} fails for some $\delta>0$. Then, for every positive integer $t$, there exists an $H$-free graph $G_t$ on $|V(G_t)|\ge t$ vertices such that
\begin{equation}\label{eq:stability-countersequence}
|E(G_t)|>
\left( \frac{2}{5} - \frac{1}{t} \right)|V(G_t)|^2
\qquad\text{and}\qquad
\tau_5(G_t)\ge\delta |V(G_t)|.
\end{equation}
In particular, $|V(G_t)|\to\infty$ and $|E(G_t)|\ge( \frac{2}{5}-o(1))|V(G_t)|^2$. Hence, we can apply Theorem~\ref{thm:stability} with $F=H$ to obtain
partitions $V(G_t)=V_{t}^{(1)} \cup\cdots \cup V_{t}^{(5)}$. Let $\mu_t=o(|V(G_t)|^2)$ be the number of missing cross-edges, that is, nonadjacent pairs with endpoints in different parts. Define 
$$\gamma_t:= \max\left\{ \frac{1}{\sqrt{|V(G_t)|}} , \frac{\sqrt{\mu_t}}{|V(G_t)|}
\right\},$$
and let $B_t$ consist of all vertices incident on at least $\gamma_t |V(G_t)|$ missing cross-edges. Counting incidences with these pairs gives $|B_t|\gamma_t |V(G_t)|\le2\mu_t$. Using this and $\gamma_t|V(G_t)|\ge\sqrt{\mu_t}$ gives, 
$$|B_t| \leq \frac{ 2 \mu_t}{ \gamma_t |V(G_t)|}  \le 2\sqrt{\mu_t}
=o(|V(G_t)|).$$ Hence, every vertex in $V_{t}^{(i)}\setminus B_t$ has fewer than
$\gamma_t |V(G_t)|$ nonneighbors outside $V_{t}^{(i)}$. Further, 
\begin{align}\label{eq:Vt}
|V_{t}^{(i)}\setminus B_t| =\frac{|V(G_t)|}{5}+o(|V(G_t)|), 
\end{align}
for every $1 \leq i \leq 5$.  

Now, suppose $e=\{u,v\}$ is a color-critical edge of $H$. Since $\chi(H\backslash\{e\})=5$, fix a proper 5-coloring of $H\backslash\{e\}$, with color classes $C_1,\ldots,C_5$. The vertices $u$ and $v$ must receive the same color, otherwise this would also be a proper 5-coloring of $H$, which contradicts $\chi(H)=6$. Relabeling the classes if necessary, assume that $u,v\in C_1$. Hence, $e$ is the only edge of $H$ whose endpoints belong to the same color class. Using the $H$-free property of $G_t$ we can show the following:

\begin{observation}\label{obs:VB}
For all sufficiently large $t$, each $V_{t}^{(i)}\setminus B_t$ is an independent set in $G_t$, for all $1 \leq i \leq 5$.  
\end{observation}

\begin{proof} 
After relabeling the parts, suppose for a contradiction, $V_{t}^{(1)}\setminus B_t$ is not an independent set in $G_t$. Fix an edge $\{x,y\}\in E(G_t[V_{t}^{(1)}\setminus B_t])$.  Recall that $C_1,\ldots,C_5$ are the color classes of a proper 5-coloring of $H\setminus\{e\}$, where $e=\{u,v\}$ and $u,v\in C_1$. We will construct an injective map $\sigma:V(H) \rightarrow V(G_t)\setminus B_t$, 
such that
$$ \sigma(u)=x,\qquad \sigma(v)=y, \quad \text{ and } \quad \sigma(C_i)\subseteq V_{t}^{(i)}\setminus B_t  , $$
for $1 \leq i \leq 5$, and the images of every two vertices in different color classes are adjacent in $G_t$. These properties will ensure that $\sigma$ embeds $H$
as a subgraph of $G_t$, which will contradict the assumption that $G_t$ is $H$-free.

To construct such a function $\sigma$, first map $u$ to $x$ and $v$ to $y$. Then map the remaining vertices of $C_1$ to arbitrary distinct unused vertices
of $V_{t}^{(1)}\setminus B_t$. No further adjacency requirements are needed
within $C_1$, since its only edge in $H$ is $e$. Next, embed the vertices of $C_2,\ldots,C_5$ one at a time. Suppose that the next vertex to be embedded belongs to $C_i$. We choose its image in $V_{t}^{(i)}\setminus B_t$, requiring it to
be unused and adjacent to every previously selected image lying in a different part. Each such image lies outside $B_t$ and therefore has fewer than $\gamma_t |V(G_t)|$ nonneighbors in $V_{t}^{(i)}\setminus B_t$. Since there are at most $|V(H)|$
previously selected images, the adjacency requirements exclude at most $\gamma_t |V(H)| |V(G_t)|$ candidates. At most $|V(H)|$ additional candidates have already been used. Hence, at least
$$|V_{t}^{(i)}\setminus B_t |- \gamma_t |V(H)| |V(G_t)|- |V(H)|>0$$
choices remain.  The strict positivity above follows since $|V_{t}^{(i)}\setminus B_t|=\frac{1}{5} |V(G_t)|+o(|V(G_t)|)$ (recall \eqref{eq:Vt}), $\gamma_t\to0$, and $|V(H)|$
is fixed. This shows that the greedy construction of the function $\sigma$ with the required properties  succeeds.  This leads to a contradiction, as explained before, and completes the proof of Observation \ref{obs:VB}.  
\end{proof}

By Observation \ref{obs:VB}, the five sets $V_t^{(1)}\setminus B_t,\ldots,V_t^{(5)}\setminus B_t$ form a proper five-coloring of $G_t[V(G_t)\setminus B_t]$. Consequently, $\tau_5(G_t)\le|B_t|=o(|V(G_t)|)$, contradicting $\tau_5(G_t)\ge\delta|V(G_t)|$ in \eqref{eq:stability-countersequence}. This completes the
proof of Proposition~\ref{ppn:coloredge}.  
\end{proof}

\end{document}